\theoremstyle{plain}
\newtheorem{theorem}{Theorem}[section]
\newtheorem{lemma}[theorem]{Lemma}
\newcommand{\eps}{\varepsilon}
\def\COMMENT#1{}
\let\COMMENT=\footnote
\def\E{\mathbb{E}}
\DeclareMathOperator{\Var}{Var}
\def\Markstrom{Markstr\"om}
\def\Umea{Ume\aa}
\begin{document}

\title{The range of thresholds for diameter 2 in random Cayley graphs}
\author{Demetres Christofides}
\author{Klas \Markstrom}
\date{\today}
\subjclass[2010]{05C80; 05C25; 05C12}
\keywords{random graphs; Cayley graphs, diameter}
\begin{abstract}
Given a group $G$, the model $\mathcal{G}(G,p)$ denotes the probability space of all Cayley graphs of $G$ where each element of the generating set is chosen independently at random with probability $p$.

Given a family of groups $(G_k)$ and a $c \in \mathbb{R}_+$ we say that $c$ is the threshold for diameter 2 for $(G_k)$ if for any $\varepsilon > 0$ with high probability $\Gamma \in \mathcal{G}(G_k,p)$ has diameter greater than 2 if $p \leqslant  \sqrt{(c - \eps)\frac{\log{n}}{n}}$ and diameter at most 2 if $p \geqslant  \sqrt{(c + \eps)\frac{\log{n}}{n}}$. In~\cite{CM2} we proved that if $c$ is a threshold for diameter 2 for a family of groups $(G_k)$ then $c \in [1/4,2]$ and provided two families of groups with thresholds $1/4$ and $2$ respectively.

In this paper we study the question of whether every $c \in [1/4,2]$ is the threshold for diameter 2 for some family of groups. Rather surprisingly it turns out that the answer to this question is negative. We show that every $c \in [1/4,4/3]$ is a threshold but a $c \in (4/3,2]$ is a threshold if and only if it is of the form $4n/(3n-1)$ for some positive integer $n$.
\end{abstract}

\maketitle

\section{Introduction}

Let us begin by recalling that given a group $G$ and a subset $S$ of $G$, the Cayley graph $\Gamma = \Gamma(G;S)$ of $G$ with respect to $S$ has the elements of $G$ as its vertex set and has an edge between $g$ and $h$ if and only if $hg^{-1} \in S$ or $gh^{-1} \in S$. We ignore any loops or multiple edges. In particular, whether $1 \in S$ or not is immaterial. Observe for example that $\Gamma$ is connected if and only if the set $S$ generates the group $G$. Throughout the paper, we will often refer to the set $S$ as the generating set of the graph $\Gamma$ irrespectively of whether it is a generating set for the group $G$ or not. 

The model $\mathcal{G}(G,p)$ is the probability space of all graphs $\Gamma(G;S)$ in which every element of $G$ is assigned to the set $S$ independently at random with probability $p$. We refer the reader to~\cite{CM2} for a discussion of some similarities and differences between this model and various other models of random graphs.


In this paper we continue the study of the diameter of random Cayley graphs that was initiated in~\cite{CM2}. Given a family of groups $(G_k)$ of orders $n_k$ with $n_k \to \infty$ as $k \to \infty$ and a $c \in \mathbb{R}_+$ we say that $c$ is the threshold for diameter 2 for $(G_k)$ if for any $\varepsilon > 0$ with high probability $\Gamma \in \mathcal{G}(G_k,p)$ has diameter greater than 2 if $p \leqslant  \sqrt{(c - \eps)\frac{\log{n}}{n}}$ and diameter at most 2 if $p \geqslant  \sqrt{(c + \eps)\frac{\log{n}}{n}}$. In~\cite{CM2} we proved that if $c$ is a threshold for diameter 2 for a family of groups $(G_k)$ then $c \in [1/4,2]$. Moreover this is best possible as $1/4$ is the threshold for the family of symmetric groups $S_n$ while $2$ is the threshold for the family of products of two-cycles $(C_2)^n$. 

Given the above result it is natural to ask whether for every $c \in [1/4,2]$ there is a family of groups for which it is a threshold for diameter $2$. Rather surprisingly, it turns out that the answer is negative. The aim of this paper is to give a characterisation of the values of $c$ that can appear as thresholds. 

\begin{theorem}\label{CompleteRange}
Let $c \in [1/4,2]$. Then $c$ is the threshold for diameter 2 for a family of groups $(G_k)$ if and only if $c \in [1/4,4/3] \cup \{\frac{4n}{3n-1}:n \in \mathbb{N}\}$.
\end{theorem}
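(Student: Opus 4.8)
The plan is to reduce the problem to a purely group-theoretic optimisation and then determine which values that optimisation attains. For a single group $G$ of order $N$ and a fixed $g\neq 1$, I would first estimate the probability that $g$ lies at distance more than $2$ from the identity in $\Gamma\in\mathcal{G}(G,p)$. Since being covered depends only on whether $g\in S\cup S^{-1}$ (negligible at these densities) or $g=s^{-1}t$ with $s,t\in S\cup S^{-1}$, I encode the latter by an auxiliary multigraph $H_g$ whose vertices are the orbits of $x\mapsto x^{-1}$ on $G$, with one edge joining the orbit of $a$ to the orbit of $a^{-1}g$ for each $a\in G$; then ``$g$ is uncovered'' is exactly the event that the set of orbits met by $S$ is an independent set of the underlying simple graph $\overline{H}_g$, where an orbit $O$ is met independently with probability $1-(1-p)^{|O|}\in\{p+o(p),\,2p+o(p)\}$. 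A short cluster-expansion estimate gives $\log\Pr[g\text{ uncovered}]=-(1+o(1))\sum_{uv\in E(\overline H_g)}q_uq_v$ as $p\to 0$, so the covering threshold for a single $g$ is $c_g=\lim\frac{p^2N}{\sum q_uq_v}$, and the threshold for a family $(G_k)$ is $\lim_k\max_{g\neq 1}c_g$: the upper direction is a union bound over the $N$ elements, while the lower direction needs that a positive proportion of elements sit at the maximum together with a second-moment (Janson-type) argument, essentially as in~\cite{CM2}.

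Next I would translate $\sum_{uv\in E(\overline H_g)}q_uq_v$ into group invariants. The multigraph $H_g$ has exactly $N$ edges; an edge has multiplicity $\geq 2$ precisely when the relevant $a$ lies in $C_G(g)$ or satisfies one of a few degenerate identities, and an orbit is ``light'' (weight $p$ rather than $2p$) exactly when it is an involution. Carrying this out — with genuinely different bookkeeping when $g$ is itself an involution, since then every edge is automatically doubled — yields $c_g$ as a ratio of $N$ to an integer linear combination of $N$, $|C_G(g)|$, the numbers of involutions in $G$ and in $C_G(g)$, the number of involutions inverting $g$, and the number of square roots of $g$. In particular, for $g$ an involution of $\mathbb{Z}_2^{a}\times H$ with trivial $H$-coordinate one gets the clean value $c_g=\frac{2|H|}{2|H|-i_H}$, where $i_H=\#\{h\in H:h^2=1\}$, and here $c_g>4/3$ happens exactly when $i_H>|H|/2$, which already signals the dichotomy.

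For the half ``$c\in[1/4,4/3]$ is a threshold'' I would exhibit explicit families. Alternating factors dilute structure: $A\times A_m$ drives the bottleneck threshold of a small group $A$ down towards $1/4$ in a controlled way, and letting $A$ range over small groups with prescribed centraliser index or involution ratio realises a dense set of values in $[1/4,1]$; for $[1,4/3]$, the families $\mathbb{Z}_2^{a}\times H$ (and $\mathbb{Z}_2^a\times H\times A_m$) with $i_H/|H|$ ranging over $[0,\tfrac12]$ give, via the formula above, all $c=\frac{2}{2-i_H/|H|}\in[1,\tfrac43]$; since the attainable ratios $i_H/|H|\leq\tfrac12$ are dense and one may also take limits of families (e.g. $\mathbb{Z}_2^{a_k}\times D_{2m_k}$ with $m_k\to\infty$ odd to hit $4/3$ itself), the whole interval is covered. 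In each case one must also verify that the displayed element really is the bottleneck, i.e. that no other $g$ has a larger $c_g$.

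The harder direction — and the main obstacle — is the converse for $c\in(4/3,2]$. One must show that $c>4/3$ forces the bottleneck element of (almost every) $G_k$ to be an involution lying in an extremely involution-rich section $H$ of $G_k$, so that $c_g$ is asymptotically $\frac{2}{2-\rho}$ with $\rho=\lim i_H/|H|>\tfrac12$; and then invoke the structural fact that a finite group in which more than half the elements satisfy $x^2=1$ has an abelian normal subgroup of index two inverted by an involution, whence $i_H/|H|=\frac{n+1}{2n}$ for a positive integer $n$ (the values realised exactly by $H=D_{4n}$). Substituting gives $c=\frac{2}{2-(n+1)/(2n)}=\frac{4n}{3n-1}$, and the dihedral construction $\mathbb{Z}_2^a\times D_{4n}$ shows each such value occurs, which pins down the set. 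The two points needing the most care are making the probabilistic estimate sharp enough that the $o(1)$ terms cannot smear neighbouring values $\frac{4n}{3n-1}$ and $\frac{4(n+1)}{3n+2}$, and establishing the quantization lemma in precisely the form required, including its stability under passing to the relevant section and to the limit over $k$.
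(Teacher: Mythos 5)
Your reduction of the problem to the single quantity $\lim_k\max_{g\neq 1}c_g$ is the point where the argument breaks. The per-element exponent alone does not determine the threshold: the \emph{number} of elements attaining (or approaching) the bottleneck exponent enters on an equal footing, which is why the paper's criterion is the first-moment sum $f_{G_k}(c)=\sum_{x\neq 1}f(x;c)$ rather than a maximum. Concretely, take $G=S_n\times C_m$ with $m=\lfloor n!^{2c/(1-2c)}\rfloor$ and $c\in(1/4,1/2)$. The elements $(1,x')$ with $x'^2\neq 1$ have the largest value of your $c_g$, namely $1/2$, independently of $m$, so your formula would predict threshold $1/2$ for every such family; but there are only $m\sim N^{2c}$ of them, their total contribution to the first moment at density $c'$ is $N^{2c-2c'+o(1)}$, and the true threshold is $c<1/2$ (this is exactly the paper's Theorem~\ref{SmallCase}). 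Your caveat that ``a positive proportion of elements sit at the maximum'' is thus not a technical convenience but a hypothesis that fails for precisely the families one must use: in the paper every interpolation range $[1/4,1/2)$, $[1/2,1)$, $[1,4/3)$ is realized by \emph{fixing} the per-element exponent and tuning the size $N^{\theta}$, $\theta<1$, of the bottleneck class ($S_n\times C_m$, $(C_2)^n\times C_m$, $(C_2)^n\times D_{2m}$), which is invisible to a max-based criterion.

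This gap propagates into your realization of $[1/4,4/3]$. If you insist on bottleneck classes of positive proportion, the attainable per-element values are quantized (centralizer indices are reciprocals of integers; involution proportions above $1/2$ are of the form $1/2+1/2n$), so ``prescribed centraliser index or involution ratio'' yields only a countable set of values, and your route to all of $[1,4/3]$ via $c=2/(2-\alpha)$ requires involution proportions to be dense in $[0,1/2]$ --- a statement that is not known and is explicitly raised as an open problem in the paper's concluding section; a diagonal/limit argument cannot substitute for it. (By contrast, the paper's $(C_2)^n\times D_{2m}$ construction keeps $\alpha\approx 1/2$ fixed and tunes the count exponent $3c/4$.) Two smaller points: your quantization input is misstated --- a group with more than half its elements involutions need not contain an abelian index-two subgroup inverted by an involution (Wall's classification also contains $D_8\times D_8$ and two further generator--relation families), although the numerical conclusion $\alpha=1/2+1/2n$ that you actually use is correct; and in the converse direction for $c>4/3$ you must also exclude fractional-size bottleneck classes pushing the threshold above $4/3$, which again forces you to work with the summed criterion, as in the paper's case analysis of non-involutions, non-central involutions and central involutions before Wall's theorem is applied. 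Your formula $c_g=2|H|/(2|H|-i_H)$ and the resulting values $4n/(3n-1)$ with the witnesses $(C_2)^a\times D_{4n}$ do agree with the paper, but the surrounding framework needs to be replaced by the count-weighted (first-moment plus Kleitman/Janson second-moment) criterion before any of this becomes a proof.
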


We break the proof of Theorem~\ref{CompleteRange} into the following results.

\begin{theorem}\label{SmallCase}
Let $c \in [1/4,1/2)$. Then $c$ is the threshold for diameter 2 for the family $S_n \times C_m$, where $m = \lfloor n!^{\frac{2c}{1-2c}}\rfloor$.
\end{theorem}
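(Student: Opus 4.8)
The plan is to apply the first- and second-moment method to the number $Z$ of non-identity elements $g$ of $G:=S_n\times C_m$ at distance greater than $2$ from the identity in $\Gamma\sim\mathcal G(G,p)$. Write $N:=n!\,m=|G|$, so that the inequalities in the definition of a threshold read $p\lessgtr\sqrt{(c\pm\eps)(\log N)/N}$; since Cayley graphs are vertex-transitive, $\Gamma$ has diameter at most $2$ precisely when $Z=0$, so it suffices to prove $\E[Z]\to0$ when $p=\sqrt{(c+\eps)(\log N)/N}$ and $Z>0$ with high probability when $p=\sqrt{(c-\eps)(\log N)/N}$. As in~\cite{CM2}, the key reformulation is that for $g\neq 1$ one has $d_\Gamma(1,g)>2$ exactly when $g\notin T$ and $T\cap gT=\varnothing$, where $T:=S\cup S^{-1}$; the second condition says that for no $x\in G$ do both $x$ and $g^{-1}x$ lie in $T$. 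Grouping each pair $\{y,y^{-1}\}$ into a single ``merged'' indicator $\mathbf 1[y\in T]$ (essentially $\mathrm{Bernoulli}(2p)$ unless $y=y^{-1}$), the event $T\cap gT=\varnothing$ becomes: the ``on'' set of merged indicators is an independent set in the auxiliary graph $H_g$ on the $\approx N/2$ merged coordinates that, for each $x\in G$, joins $\{x,x^{-1}\}$ to $\{g^{-1}x,x^{-1}g\}$.

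The heart of the matter is to show that $\Pr[d_\Gamma(1,g)>2]$ is governed, up to a factor $1+o(1)$, solely by the number of edges $E_g$ of $H_g$. At the relevant densities $p^2N=\Theta(\log N)$ one has $p^3N\to0$ and $p^4N\to0$, so both the error term in Janson's inequality and the second-order term in the Harris inequality (the events ``$x,g^{-1}x\in T$'' being increasing) are $o(1)$ in the exponent, yielding
\[
\Pr[d_\Gamma(1,g)>2]=(1+o(1))\,e^{-4p^2E_g}
\]
whenever $H_g$ has no ``loops'' and no very short cycles — the case for all but a negligible set of $g$, handled separately. One then classifies $g=(\sigma,j)$ by the local structure of $H_g$. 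If $\sigma\neq\mathrm{id}$, then $H_g$ is $4$-regular at all but $o(N)$ of its vertices — this uses that no non-identity element of $S_n$ has centraliser of density bounded away from $0$ and that no element of $S_n$ has more than $o(n!)$ square roots — so $E_g=(1+o(1))N$ and these $g$ contribute at most $N\cdot e^{-(4+o(1))p^2N}=N^{1-4(c+\eps)+o(1)}\to0$; the $\approx n!^{1/2}=N^{(1-2c)/2+o(1)}$ genuine involutions of $G$ have $E_g\approx N/2$ and contribute at most $N^{(1-2c)/2-2(c+\eps)+o(1)}\to0$ since $c>1/6$. The exceptional, dominant family is $\mathcal C:=\{(\mathrm{id},j):j\neq0,\ \mathrm{ord}(j)\geq3\}$: here conjugation by $\sigma=\mathrm{id}$ is trivial, $H_g$ is $2$-regular off $o(N)$ vertices (a disjoint union of cycles), $E_g=(1+o(1))N/2$, and $\Pr[d_\Gamma(1,g)>2]=(1+o(1))e^{-2p^2N}$. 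Since $|\mathcal C|=(1-o(1))m$ and the choice $m=\lfloor n!^{2c/(1-2c)}\rfloor$ makes $n!=N^{1-2c+o(1)}$, hence $m=N^{2c+o(1)}$, the contribution of $\mathcal C$ is $N^{2c+o(1)}e^{-2p^2N}$, which is $N^{-2\eps+o(1)}\to0$ at $p=\sqrt{(c+\eps)(\log N)/N}$ and $N^{2\eps+o(1)}\to\infty$ at $p=\sqrt{(c-\eps)(\log N)/N}$. Summing over all types, $\E[Z]\to0$ above the threshold, giving the upper bound by Markov, and $\E[Z]\to\infty$ below it.

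For the lower bound one discards everything but $\mathcal C$: let $Z'$ count the $j$ with $1\le j\le\lfloor m/2\rfloor$, $\mathrm{ord}(j)\geq3$, and $d_\Gamma(1,(\mathrm{id},j))>2$. Then $\E[Z']=(1+o(1))\tfrac m2\,e^{-2p^2N}=\Theta(N^{2\eps})\to\infty$, so by Chebyshev it is enough to prove $\E[Z'^2]=(1+o(1))(\E[Z'])^2$. The diagonal contributes only $\E[Z']=o((\E[Z'])^2)$. For distinct admissible $j\neq j'$, the point is that $H_{(\mathrm{id},j)}$ and $H_{(\mathrm{id},j')}$ are \emph{edge-disjoint} on their common vertex set — an edge of the first joins $\{(\tau,k)^{\pm}\}$ to $\{(\tau,k\pm j)^{\pm}\}$, and since $1\le j\neq j'\le\lfloor m/2\rfloor$ we have $\{\pm j\}\cap\{\pm j'\}=\varnothing$ in $C_m$ — so their union is $4$-regular off $o(N)$ vertices, and the same Janson/Harris estimate gives $\Pr[d_\Gamma(1,(\mathrm{id},j))>2\wedge d_\Gamma(1,(\mathrm{id},j'))>2]=(1+o(1))e^{-4p^2N}$, which is $(1+o(1))$ times the product of the two individual probabilities. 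Summing over the $\approx(m/2)^2$ ordered pairs gives $\E[Z'^2]=(1+o(1))(\E[Z'])^2$, whence $Z'>0$ — and in particular $\Gamma$ has diameter greater than $2$ — with high probability.

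The genuinely delicate point, which I expect to be the main obstacle, is establishing the estimate $\Pr[d_\Gamma(1,g)>2]=(1+o(1))e^{-4p^2E_g}$ in a form precise and uniform enough to survive squaring in the second moment: one must verify that $p^3N$ and $p^4N$ tend to $0$ at the threshold, control the $o(N)$ ``degenerate'' coordinates (elements $x$ with $x=x^{-1}$, and square roots of $g$) — which is why one needs their number to be not merely $o(N)$ but polynomially smaller than $N$, as is the case here — and dispose separately of the few $g$ for which $H_g$ has short cycles or is sparser than expected (notably $g=(\mathrm{id},m/2)$ when $m$ is even, for which $H_g$ is a perfect matching and $E_g\approx N/4$). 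By comparison, the classification of elements according to centraliser size in $S_n$ and the edge-disjointness underlying the second moment are routine; the one essentially new idea relative to~\cite{CM2} is that adjoining the cyclic factor $C_m$ creates a controlled ``abelian-like'' population $\mathcal C$ of size $N^{2c}$, calibrated exactly so that its transition occurs at $p=\sqrt{c(\log N)/N}$.
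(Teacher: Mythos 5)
Your argument is essentially a specialised, self-contained re-derivation of what the paper does in two stages: the paper first proves the general criterion of Theorem~\ref{General} (Kleitman's inequality, the almost-independence Lemma~\ref{almost-independence} and a greedy choice of weakly correlated elements for the lower bound; Janson's inequality on the merged classes $\{y,y^{-1}\}$ with the case analysis of Tables~1--2 for the upper bound), and then deduces Theorem~\ref{SmallCase} by computing $e(\Gamma_x)$ and $\ell(\Gamma_x)$ for the four types of elements of $S_n\times C_m$; the dominant type is exactly your class $\mathcal C$ of central elements $(\mathrm{id},j)$, contributing $N^{2c-2c'+o(1)}$. You use the same toolkit (the $\{y,y^{-1}\}$ merging, Janson above the threshold, Harris/Kleitman plus a restricted second moment below it) and the same classification by centraliser size, square roots and involution counts; what your packaging buys is that, because the dominant elements are central, you can skip the general Tables~1--2 bookkeeping and replace the paper's greedy ``almost independent'' selection by the explicit subfamily $j\in\{1,\dots,\lfloor m/2\rfloor\}$, at the cost of redoing for this one family what Theorems~\ref{General-1} and~\ref{General-2} establish once for all families. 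Your exponent bookkeeping (generic $g$: $E_g\approx N$; involutions and central $g$ of order $\geq 3$: $E_g\approx N/2$; the single element $(\mathrm{id},m/2)$: $E_g\approx N/4$) matches the paper's.

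Two points need repair, one of substance. The claim that $H_{(\mathrm{id},j)}$ and $H_{(\mathrm{id},j')}$ are edge-disjoint is false: a merged vertex $[(\tau,k)]=\{(\tau,k),(\tau^{-1},-k)\}$ only records $k$ up to sign when $\tau^2=\mathrm{id}$, so if $\tau^2=\mathrm{id}$ and $2k\equiv\pm(j+j')$ or $2k\equiv\pm(j-j')\pmod m$ then $[(\tau,k\pm j)]=[(\tau,k\mp j')]$ and the two graphs share an edge. Fortunately the number of such $\tau$ is $n!^{1/2+o(1)}$, so the number $r$ of common edges is $N^{(1-2c)/2+o(1)}$ and $p^2r\to 0$; comparing the joint probability with the product through the union graph, whose edge count is $E_g+E_{g'}-r$, still yields the factor $1+o(1)$ you need --- this is precisely the correction that Lemma~\ref{almost-independence} formalises, so the patch is routine but must be made, since exact disjointness is what your second moment ``hinges'' on. Relatedly, in the ratio $\E[Z'^2]/(\E[Z'])^2$ you must compare exponents via the exact edge counts and $r$, not via the approximations $E_g=(1+o(1))N/2$: an $o(N)$ error in the exponent is a factor $N^{o(1)}$, harmless in the first moment where you have polynomial room $N^{\pm 2\varepsilon}$, but not inside a bound that must be $1+o(1)$. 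Finally, under the paper's convention the condition for a path $1\hbox{--}y\hbox{--}g$ is $y\in T$ and $gy^{-1}\in T$ rather than $g^{-1}y\in T$; this is only a left/right convention and does not affect any of the counts.
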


\begin{theorem}\label{LargeCase}
Let $c \in [1/2,1)$. Then $c$ is the threshold for diameter 2 for the family $G = (C_2)^{n} \times (C_m)$ where $m = \lfloor 2^{\frac{(1-c)n}{c}}\rfloor$.
\end{theorem}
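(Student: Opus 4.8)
The plan is to analyze the diameter-2 property of $\Gamma \in \mathcal{G}(G,p)$ for $G = (C_2)^n \times C_m$ via a second-moment/Poisson-approximation argument on the number of "bad pairs": pairs of vertices $\{u,v\}$ at distance greater than $2$. By vertex-transitivity it suffices to fix $u$ to be the identity and count vertices $v$ not joined to the identity by a path of length at most $2$, i.e.\ $v \notin S \cup S^{-1}$ and for no $s \in S$ (with $s \neq v$) do we have $vs^{-1} \in S \cup S^{-1}$. Since in $(C_2)^n$ every element is an involution and in $C_m$ the relevant structure is cyclic, one must keep careful track of the two coordinates; the generating set $S$ has expected size $p\cdot 2^n m$, and writing $p = \sqrt{(c+\eps)\log N / N}$ with $N = 2^n m$ one checks $pN \approx \sqrt{(c\pm\eps) N \log N}$, so $|S|$ is of order $\sqrt{N\log N}$ as in the symmetric-group and $(C_2)^N$ cases from~\cite{CM2}.

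First I would compute, for a fixed non-generator $v$, the probability that $v$ is at distance greater than $2$ from the identity. The key quantity is the number of "representations" $v = s_1 s_2$ with $s_1, s_2 \in S \cup S^{-1}$; because $(C_2)^n$ is elementary abelian the number of ordered pairs $(x,y)$ with $x,y \in (C_2)^n$ and $xy$ equal to the first coordinate of $v$ is exactly $2^n$, but once we also fix the $C_m$-coordinate the number of unordered candidate pairs $\{a,b\} \subseteq G$ with $ab = v$ is about $2^n m / 2 = N/2$ — crucially \emph{not} $N$ — so the exponent gains a factor that produces the constant $1/2$ rather than $1$. More precisely, $\mathbb{P}(v \text{ not reachable}) \approx (1-p)^{2} \prod_{\{a,b\}:\, ab=v} (1 - p^2) \approx \exp(-p^2 \cdot N/2)$, and setting this equal to $1/N$ forces $p^2 N/2 = \log N$, i.e.\ $p^2 = 2\log N/N$. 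The task is then to reconcile this with the target $p = \sqrt{(c\pm\eps)\log n/n}$: here $n$ (the number of $C_2$-factors) plays the role of the ambient "$n$" in the threshold definition, and since $N = 2^n m$ with $m = \lfloor 2^{(1-c)n/c} \rfloor$ we get $\log N = n\log 2 + \tfrac{(1-c)n}{c}\log 2 + O(1) = \tfrac{n\log 2}{c}(1+o(1))$, while $N = 2^{n/c}(1+o(1))$; plugging in, $2\log N / N$ and $(c)\log N / N$ comparisons show the critical $p$ satisfies $p^2 \sim c \cdot \frac{\log N}{N}\cdot\frac{2}{c} \cdots$ — so the bookkeeping must be done so that the threshold constant comes out to exactly $c$. (I would double-check by testing the endpoints: $c = 1/2$ gives $m = \lfloor 2^n\rfloor$, so $N = 2^{2n}$, and $c=1$ gives $m=1$, $N=2^n$, recovering the $(C_2)^N$ family with threshold $2$ from~\cite{CM2}; both should be consistent.)

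Having pinned the first moment, I would then show concentration. For the $1$-statement (diameter $\le 2$ when $p \ge \sqrt{(c+\eps)\log n/n}$): the expected number of bad pairs is $N \cdot \mathbb{P}(v\text{ unreachable}) \le N^{1-\eps'} \to 0$, so by Markov with high probability there are none. This direction is the easy one and needs only the first-moment estimate plus a union bound over $v$, handled by vertex-transitivity. For the $0$-statement (diameter $> 2$ when $p \le \sqrt{(c-\eps)\log n/n}$): the expected number of unreachable $v$ tends to infinity, and I would use the second moment method — bound $\mathbb{E}[X^2]$ where $X$ counts unreachable vertices, showing $\mathbb{E}[X^2] = (1+o(1))\mathbb{E}[X]^2$. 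The correlation between the events "$v$ unreachable" and "$w$ unreachable" is controlled by the overlap of their representation sets $\{a,b: ab=v\}$ and $\{a,b: ab=w\}$; in an abelian group these overlaps are small and uniform enough (each element $a$ lies in exactly one pair for each target, so the pairwise intersections have size $O(1)$-ish relative to $N$), giving the required asymptotic independence.

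The main obstacle I anticipate is the precise treatment of the $C_2$-coordinate collapse in the counting of representations $v = ab$: because $(C_2)^n$ has exponent $2$, the map $(a,b)\mapsto ab$ is far from injective in that coordinate, and one must be careful that the number of \emph{distinct unordered pairs} $\{a,b\}$ of distinct group elements with product $v$ — which is what actually controls $\prod(1-p^2)$ — is $N/2 + O(\text{lower order})$ and that the lower-order terms (pairs where $a=b$, i.e.\ $a^2 = v$, which in the $C_m$-coordinate forces $v$'s $C_m$-part to be a square) do not perturb the exponent. A secondary technical point is ensuring the error terms from replacing $(1-p^2)^{N/2}$ by $\exp(-p^2N/2)$ are $o(1/N)$ uniformly, which requires $p^2 = o(1)$ and $p^4 N = o(1)$ — both easily verified from $p^2 \sim \log N/N$ — together with handling the small number of vertices $v$ whose representation count deviates from $N/2$. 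Once these counting subtleties are resolved, the two moment computations are routine and parallel to those in~\cite{CM2}.
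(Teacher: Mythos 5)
Your plan misses the structural feature that actually produces the threshold $c$ for this family, and the first--moment count you propose is not correct. In the paper the proof is an application of the general criterion (Theorem~\ref{General}): one computes, for each $x$, the number of pairs $\{g,h\}$ whose joint presence in $S$ puts $x$ within distance $2$ of the identity (the edges of $\Gamma_x$). Because edges of the Cayley graph come from $S\cup S^{-1}$, a path $1\,\text{--}\,y\,\text{--}\,x$ corresponds to up to four pairs $\{g,h\}$ with $g\in\{y,y^{-1}\}$, $h\in\{xy^{-1},yx^{-1}\}$; hence for a \emph{generic} $x$ (not an involution) one gets $e(\Gamma_x)=(2+o(1))N$, while for the $2^n-1\sim N^{c}$ central involutions $x=(x',1)$ the identifications $xy=x^{-1}y=yx=yx^{-1}$ cut this to $e(\Gamma_x)=(1+o(1))N$. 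The threshold then comes from the involutions alone: each is unreachable with probability $N^{-c'(1+o(1))}$ when $p^2=c'\log N/N$, so the expected number of unreachable vertices is $N^{c-c'+o(1)}$ (plus a negligible $N^{1-2c'}$ from the non-involutions, which vanishes since $c'\geq c\geq 1/2$), and this switches at $c'=c$. Your proposal instead assigns a single ``representation count'' of about $N/2$ to \emph{every} target $v$; that count is wrong (it ignores the inverse-closure of the edge relation and the involution/non-involution dichotomy), and, taken at face value, it would yield a threshold constant independent of $c$ (in fact the constant $2$, as for $(C_2)^n$), contradicting the statement you are trying to prove. The sentence ``the bookkeeping must be done so that the threshold constant comes out to exactly $c$'' is precisely the step you have not supplied, and it cannot be supplied without separating the $\Theta(N^c)$ involutions from the rest.

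Two further problems. First, you misread the normalisation: in the paper's definition of threshold, $n$ denotes the \emph{order} of the group (here $N=2^n m$, with $p=\sqrt{(c\pm\eps)\log N/N}$), not the number of $C_2$-factors, so your attempted reconciliation is based on the wrong quantity. Your endpoint check also does not support your computation: $c=1$ is excluded from the theorem, and the limit family $(C_2)^n$ has threshold $2$, reflecting a genuine discontinuity caused by the involution structure rather than a consistency check your uniform count would pass. Second, the $0$-statement is not as routine as you suggest: for the full count of unreachable vertices the events are strongly correlated and the naive second moment fails in general; the paper handles this (inside Theorem~\ref{General-1}) with a quantitative almost-independence lemma for the graphs $\Gamma_x$, $\Gamma_y$ and a greedy selection of a subfamily of nearly independent events. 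For this particular abelian-flavoured family your overlap heuristic may be salvageable, but it would need to be carried out for the correct class of events (unreachable \emph{involutions}), not for the uniform count you set up.
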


\begin{theorem}\label{LargerCase}
Let $c \in [1,4/3)$. Then $c$ is the threshold for diameter 2 for the family $G = (C_2)^{n} \times (D_{2m})$ where $m = \lfloor 2^{\frac{(4-3c)n}{3c}}\rfloor$ and $D_{2m}$ is the dihedral group of order $2m$.
\end{theorem}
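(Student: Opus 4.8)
The plan is to locate exactly which vertices are hardest to reach from the identity and to show that the expected number of such vertices crosses the constant level precisely when $p^2|G|=c\log|G|$. Write $N:=|G|=2^{n+1}m$; from $m=\lfloor 2^{(4-3c)n/(3c)}\rfloor$ one gets $\log_2 N=\tfrac{4n}{3c}+1+o(1)$, hence $2^n=N^{3c/4+o(1)}$ and $m=N^{1-3c/4+o(1)}$, and by monotonicity of the diameter in $p$ it is enough to argue at $p=\sqrt{(c\pm\eps)\log N/N}$. Set $T:=S\cup S^{-1}$; since $G$ is vertex-transitive, the diameter is at most $2$ iff every $g\in G$ has $g\in T$ or there is an $x$ with $x\in T$ and $gx^{-1}\in T$.

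\emph{Structural reduction.} Partition $G$ into \emph{atoms}: the pairs $\{h,h^{-1}\}$ with $h^2\neq1$, called \emph{active} when they meet $S$ (probability $q:=1-(1-p)^2=(2+o(1))p$), and the singletons $\{h\}$ with $h^2=1$, active when $h\in S$ (probability $p$). In $(C_2)^n\times D_{2m}$ the two‑element atoms are exactly the $\{(u,r^k),(u,r^{-k})\}$ with $k\notin\{0,m/2\}$ — the \emph{rotation atoms}, $\tfrac14 N(1+o(1))$ of them — while the singleton atoms are the $\{(u,sr^j)\}$ — the \emph{reflection atoms}, $\tfrac12 N$ of them — together with $O(2^n)$ exceptional atoms $\{(u,1)\},\{(u,r^{m/2})\}$. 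A path $1-x-g$ exists iff the atom of $x$ and the atom of $gx^{-1}$ are both active, so the potential length‑$2$ paths to a fixed $g$ are the edges of a multigraph on the atoms; its connected components (the \emph{blocks} of $g$) involve pairwise disjoint subsets of $G$, and for the vertices we care about the atom of $g$ lies in none of them. Hence $\Pr[d(1,g)>2]=(1-p)\prod_{B}\Pr[B\text{ fails}]$, where $B$ fails iff no edge of $B$ has both endpoints active.

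\emph{The exponent.} Now one computes $-\sum_B\log\Pr[B\text{ fails}]$ for every $g=(v,d_0)$. If $d_0$ is a reflection, the rotation atoms become centres of vertex‑disjoint cherries with two reflection‑atom leaves (failure probability $1-q^2$); if $d_0=r^\ell$ with $\ell\notin\{0,m/2\}$, the atoms break into cycles of length $\ge3$; in both cases the number of edges is $(1-o(1))$ times the number of atoms, so the exponent is $(1-o(1))p^2N$. The exception is $d_0\in\{1,r^{m/2}\}$: there the middle vertices $(u,r^{k})$ and $(u,r^{-k})$ reach the \emph{same} partner atom $(v+u,\{k,-k\})$, so the rotation atoms split into a perfect matching ($\tfrac18 N$ edges, failure probability $1-q^2$) and, likewise, the reflection atoms split into a perfect matching ($\tfrac14 N$ edges, failure probability $1-p^2$), whence the exponent is $\tfrac12 p^2N+\tfrac14 p^2N+o(1)=(\tfrac34+o(1))p^2N$. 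Thus $H:=\{(v,1):v\neq0\}\cup\{(v,r^{m/2}):v\}$ has $|H|=\Theta(2^n)=N^{3c/4+o(1)}$ and $\Pr[d(1,g)>2]=\exp(-(\tfrac34+o(1))p^2N)$ for $g\in H$, while $\Pr[d(1,g)>2]\le\exp(-(1-o(1))p^2N)$ for every $g\notin H$ (loops and exceptional blocks only help, and one checks that $\tfrac34$ is the unique minimal exponent per unit of $|G|$, attained only on $H$).

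\emph{Conclusion.} For $p\ge\sqrt{(c+\eps)\log N/N}$, a union bound gives $\sum_{g\notin H}\Pr[d(1,g)>2]\le N^{\,1-(1-o(1))(c+\eps)}\to0$, where $c\ge1$ is used, and $\sum_{g\in H}\Pr[d(1,g)>2]\le N^{3c/4+o(1)}\cdot N^{-3(c+\eps)/4+o(1)}\to0$; hence with high probability every vertex is within distance $2$ of $1$, so the diameter is at most $2$. For $p\le\sqrt{(c-\eps)\log N/N}$, set $X=\#\{v\neq0:d(1,(v,1))>2\}$; then $\E X=N^{3\eps/4+o(1)}\to\infty$. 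Using the block factorisation for each $(v,1)$ and noting that for $v\neq v'$ the relevant overlaps are, slot by slot, the union of the two perfect matchings through $v$ and through $v'$ — a disjoint union of $4$‑cycles, on which the probability of independence exceeds the product of the two single‑matching probabilities by only a factor $1+o(1)$ — one gets $\Var(X)=o((\E X)^2)$, so by Chebyshev $\Pr[X=0]\to0$ and with high probability some $(v,1)$ is at distance greater than $2$ from $1$. The main work is the exponent computation of the third paragraph — pinning the constant $\tfrac34$ and showing it is attained only on the class $H$ of size $N^{3c/4+o(1)}$ — together with the uniformity of the error terms needed for the variance estimate.
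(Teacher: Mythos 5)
Your argument is correct in substance, but it takes a genuinely different route from the paper. The paper obtains this theorem as a corollary of its general criterion (Theorem~\ref{General}): one only computes, for each type of $x\in (C_2)^n\times D_{2m}$, the quantities $e(\Gamma_x)$ and $\ell(\Gamma_x)$ (central involutions $(x',1)$ give $e(\Gamma_x)=(3/4+o(1))N$, rotations $(1,x')$ are killed by their $\Theta(2^n)$ square roots, everything else has $e(\Gamma_x)\geq(1+o(1))N$), and all the probabilistic work -- Kleitman's inequality, Janson's inequality with the carefully chosen index sets $J,I_0,I_1,I_2$, and the greedy extraction of an almost-independent family for the second moment -- is done once and for all in Sections~3 and~4. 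You instead reprove the specific case from scratch: your atom/block decomposition makes the failure event for each target $g$ an exact product over independent blocks, you identify directly the hard class $H$ of central involutions with exponent $(\tfrac34+o(1))p^2N$ (the same $3/4$ as the paper's case (a), coming from the $\Theta(N/2)$ involutions), a plain union bound replaces Janson above the threshold, and below the threshold the second moment works without any sparsification because the matchings attached to distinct $(v,1)$, $(v',1)$ share no pair-events and overlap only in $4$-cycles, giving a correlation factor $\exp(O(p^3N))=1+o(1)$. What each approach buys: the paper's machinery is heavier but is reused verbatim for all four families in Section~5; your proof is self-contained and elementary, precisely because for this family the dependency structure of the hard vertices is a perfect matching, which is the structural reason the paper's general second-moment argument is so delicate in general but trivial here.

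A few small points you should tighten, none of which breaks the argument. The justification ``the number of edges is $(1-o(1))$ times the number of atoms'' is not right for the cherry blocks (there edges are $2/3$ of the atoms); the stated exponent $(1-o(1))p^2N$ is nevertheless correct because the cherry centre is a two-element atom, active with probability $\sim 2p$, so each cherry fails with probability $1-(2p-p^2)^2$. For the cycle blocks you need a one-line argument (Janson within a block, or a direct expansion) to convert the edge count into $-\log\Pr[B\ \text{fails}]\geq(1-o(1))\sum_{e\in B}\Pr[e\ \text{active}]$; for matchings and cherries it is exact, which is all the lower bound uses. The exact asymptotics on all of $H$ fails for the single element $(0,r^{m/2})$ when $4\mid m$ (it has $\Theta(2^n)$ square roots, so its probability is far smaller) -- harmless, since for that element you only need an upper bound and your second-moment set consists of the $(v,1)$ only, but the phrase ``attained only on $H$'' should be stated as an inequality. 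Finally, note where the hypotheses enter: $c\geq 1$ is what makes the union bound over the $\approx N/2$ reflections (exponent exactly $(1+o(1))p^2N$) converge, and $2^n=o(N)$, i.e.\ $c<4/3$, is what keeps the exceptional atoms negligible in both the expectation and the variance estimates.
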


\begin{theorem}\label{InterestingCase}
Suppose $c > 4/3$ is the threshold for diameter 2 for some family of groups. Then $c \in \{\frac{4n}{3n-1}:n \in \mathbb{N}\}$. Moreover, given a positive integer $n$, $\frac{4n}{3n-1}$ is the threshold for diameter 2 for the family $((C_2)^k \times D_{4n})_k$.
\end{theorem}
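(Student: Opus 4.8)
The plan is to pass, as in~\cite{CM2}, from thresholds to a purely local invariant of each group element, and then to pin down exactly which values of this invariant below $3/4$ are possible. For a finite group $G$ of order $N$ and $g \neq 1$, write $O(x)=\{x,x^{-1}\}$, let $\mathcal P_g=\{(u,v)\in G\times G : uv=g\}$ (so $|\mathcal P_g|=N$), and let $\phi\colon\mathcal P_g\to\{\text{unordered pairs of orbits}\}$ be $\phi(u,v)=\{O(u),O(v)\}$. The first step is to record that the distance from $1$ to $g$ in $\Gamma\in\mathcal G(G,p)$ is governed by the events $E_u=\{S\cap O(u)\neq\emptyset\}\cap\{S\cap O(v)\neq\emptyset\}$ with $v=u^{-1}g$ (the event $g\in S\cup S^{-1}$ being negligible when $p\asymp\sqrt{\log N/N}$); that $P(E_u)=(1+o(1))|O(u)|\,|O(v)|p^2$; that each distinct event is listed $|\phi^{-1}(\phi(u,v))|$ times as $(u,v)$ runs over $\mathcal P_g$; and that the pairwise dependencies contribute only $O(Np^3)=o(Np^2)$ to Janson's $\Delta$, while the $\bar E_u$ are decreasing and hence positively correlated. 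Consequently $P(\operatorname{dist}(1,g)>2)=\exp\big(-(1+o(1))\,\gamma_G(g)\,p^2N\big)$, where
\[
\gamma_G(g)=\frac1N\sum_{(u,v)\in\mathcal P_g}\rho(u,v),\qquad \rho(u,v)=\frac{|O(u)|\,|O(v)|}{\bigl|\phi^{-1}(\phi(u,v))\bigr|},
\]
and (via the machinery of~\cite{CM2}) the family $(G_k)$ has threshold $c$ precisely when $\min_{g\neq1}\gamma_{G_k}(g)\to 1/c$.

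\textbf{A gap for $\rho$.} The crucial elementary point is that $\rho(u,v)\geq\tfrac12$ always, with equality if and only if $u$ and $v$ are commuting involutions (the identity permitted), and that otherwise $\rho(u,v)\geq1$. Indeed $|\phi^{-1}(\{A,B\})|\leq|A|+|B|\leq4$, and a short case analysis shows $|\phi^{-1}(\{A,B\})|\leq2$ whenever one of $A,B$ has size $1$; so $\rho\in\{\tfrac12,1\}$ when both orbits are singletons, $\rho\geq1$ when exactly one is, and $\rho\geq1$ when both have size $2$. Moreover the commuting-involution pairs $(u,v)\in\mathcal P_g$ are exactly the pairs $(u,ug)$ with $u\in I\cap C_G(g)$, where $I=\{x\in G:x^2=1\}$. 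Hence
\[
\gamma_G(g)\;\geq\;1-\tfrac12\cdot\frac{|I\cap C_G(g)|}{N},
\]
so $\gamma_G(g)<3/4$ forces $|I\cap C_G(g)|>N/2$, which forces $C_G(g)=G$ (thus $g$ is a central involution) and $|I|>N/2$.

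\textbf{From many involutions to the discrete set.} I would then invoke the classical structure theorem that a finite group with $|I|>|G|/2$ is a generalized dihedral group $\operatorname{Dih}(A)=A\rtimes C_2$ over a finite abelian $A$ with the outer coset acting by inversion (elementary abelian $2$-groups being the trivial-action case); if no convenient reference is at hand, this follows from the pigeonhole fact $xI\cap I\neq\emptyset$ for all $x$, which forces an abelian subgroup of index at most $2$ inverted by the outer coset. Given $G=\operatorname{Dih}(A)$ and $g$ a central involution, a direct check over the four types of $(u,v)\in\mathcal P_g$ (the two reflection types, and the rotation--rotation pairs with both, resp.\ neither, an involution) shows every $\rho(u,v)\in\{\tfrac12,1\}$; hence the displayed inequality is an equality, and with $d=[A:A[2]]\in\mathbb N$ one gets $|I|=|A|+|A[2]|$ and
\[
\gamma_G(g)=1-\tfrac12\cdot\frac{|A|+|A[2]|}{2|A|}=\frac{3|A|-|A[2]|}{4|A|}=\frac{3d-1}{4d}.
\]
Thus every value of $\gamma_G(g)$ below $3/4$ lies in $\{\tfrac{3d-1}{4d}:d\in\mathbb N\}$; since $\tfrac{3d-1}{4d}\uparrow 3/4$, a family with threshold $c>4/3$ satisfies $\min_g\gamma_{G_k}(g)\to 1/c<3/4$, so $\min_g\gamma_{G_k}(g)=\tfrac{3d_k-1}{4d_k}$ eventually with $(d_k)$ bounded, hence constant equal to some $n$ along a subsequence, whence $1/c=\tfrac{3n-1}{4n}$, i.e.\ $c=\tfrac{4n}{3n-1}$.

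\textbf{The family $(C_2)^k\times D_{4n}$, and the main obstacle.} Here $(C_2)^k\times D_{4n}=\operatorname{Dih}\big((C_2)^k\times C_{2n}\big)$, for which $d=[A:A[2]]=n$. By the dichotomy above every non-central $g$ has $\gamma_G(g)\geq3/4$, while every central involution has $\gamma_G(g)=\tfrac{3n-1}{4n}$; so $\min_{g\neq1}\gamma_G(g)=\tfrac{3n-1}{4n}$ is attained exactly on the $\Theta(N)$ central involutions. Feeding this into the framework of~\cite{CM2} yields the upper bound on the threshold by a union bound over all $N$ elements, and the matching lower bound by a second-moment argument over the central involutions, whose ``uncovered'' events are nearly independent because distinct such $g$ give nearly disjoint families of relevant orbit-pairs (each orbit-pair $\{A,B\}$ is relevant to at most $|A|\,|B|\leq4$ of them). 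The genuinely hard part, beyond getting the $\gamma$/$\rho$ bookkeeping exactly right — which is delicate precisely because of the several ways candidate pairs and their induced events can coincide — is the reduction to the classification of groups with $|I|>|G|/2$ and the verification that in those groups all $\rho(u,v)\in\{\tfrac12,1\}$, i.e.\ that the basic inequality for $\gamma_G(g)$ is tight.
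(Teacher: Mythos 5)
Your reduction to a structure theorem contains a genuine gap: the claim that a finite group with more than half of its elements involutions must be generalized dihedral, $G=\operatorname{Dih}(A)$ with $A$ abelian of index $2$ inverted by the outer coset, is false. The group $D_8\times D_8$ (direct product of two dihedral groups of order $8$) has $35$ involutions among its $64$ elements, yet every abelian subgroup has order at most $16$ (its projections to the factors are abelian subgroups of $D_8$, hence of order at most $4$), so it has no abelian subgroup of index $2$; the same holds for $(C_2)^k\times D_8\times D_8$, and Wall's classification (the paper's Theorem~\ref{InvolutionTheorem}, types (II)--(IV)) contains further non-generalized-dihedral examples. Your proposed pigeonhole repair only yields that every element is a product of two involutions, which is much weaker (it holds in every symmetric group) and does not force an index-$2$ abelian subgroup. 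Since your identification of the possible values of $\gamma_G(g)$ below $3/4$ --- and hence the whole discreteness conclusion $c\in\{\tfrac{4n}{3n-1}\}$ --- is carried out only inside $\operatorname{Dih}(A)$, the argument does not cover the exceptional groups. The conclusion does survive, but for the reason the paper exploits: one needs Wall's (or Miller's) theorem to know that the involution fraction $\alpha>1/2$ can only take the values $1/2+1/2n$, and then the exponent attached to a central involution depends only on $\alpha$ (in the paper's notation, $e(\Gamma_x)=(1-\alpha/2+o(1))N$), so no finer structural computation is needed. As written, your proof replaces the one genuinely needed classification input by an incorrect statement.

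A secondary, repairable imprecision: the criterion ``$(G_k)$ has threshold $c$ iff $\min_{g\neq1}\gamma_{G_k}(g)\to1/c$'' is too coarse. The paper's Theorem~\ref{General} weights each element by $f(x;c)=g(x)^{c}h(x)^{\sqrt c}$ and sums; both the number of elements attaining (nearly) the extremal exponent and the square-root (loop) correction $h(x)$ matter. For the family $S_n\times C_m$ of Theorem~\ref{SmallCase} the extremal elements number only about $N^{2c}$ and the threshold is $c$, which the minimum alone does not see; and a central involution $x$ with many square roots has $h(x)=o(1)$ and must be discarded, which is why the paper works with the $\Theta(N)$ central involutions $(x,1)$ lying in the $(C_2)^k$ factor, which have no square roots at all. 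In your notation the square roots correspond to pairs $(u,v)$ with $[u]=[v]$, for which $\Pr(E_u)\asymp p$ rather than $p^2$, so your formula for $\gamma_G(g)$ silently ignores exactly the terms the paper's $h(x)$ is designed to control; this needs to be addressed both in the ``only if'' direction (bounded multiplicity of the minimizing class) and in the verification for $(C_2)^k\times D_{4n}$.
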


It is evident that Theorem~\ref{CompleteRange} follows from Theorems~\ref{SmallCase}-\ref{InterestingCase}. These results will be consequences of the more general Theorem~\ref{General} which enables us to determine the threshold for diameter~2 for a family of groups provided we have enough information about some specific structure of the groups of this family. 

As we mentioned this looks like a surprising result as one would expect that by taking `suitable combinations' of groups with different thresholds, one should be able to interpolate to get all possible values between the two thresholds. This is exactly what happens in Theorem~\ref{SmallCase}. Symmetric groups have threshold $1/4$, cyclic groups have threshold $1/2$, and by taking a suitable direct product we can get any threshold in the interval $[1/4,1/2]$. However there is a natural reason why this result is not so surprising after all. As one would expect the result depends on various group properties and there are many such properties for which you cannot interpolate. We are thinking here of properties which say that when the group is `too close to being abelian' then it actually is abelian. For example it is a trivial observation that if more than half of the elements of a group are central (i.e.~they commute with every other element) then the group is abelian and so all elements are central. (In fact the same holds even when more than a quarter of the elements are central.) In a similar manner, if more than $5/8$ of pairs of elements commute then all pairs commute~\cite{Gus,Rus} and if less than $7|G|/4$ pairs of elements are conjugate then no pair of distinct elements are conjugate~\cite{BBW}. In our situation the important group theoretic property which affects the final outcome is the percentage of involutions (i.e.~elements which square to the identity) of a finite group. It turns out that in the interval $[1/2,1]$ there are only finitely many values which can appear as proportion of involutions~\cite{Miller} and this is exactly what causes the set of thresholds in the interval $(4/3,2]$ to be discrete. 

The plan of the paper is as follows. In Section~2 we define certain dependency graphs of a group $G$. Their structure and more specifically the number of their edges will be the key factor to decide whether $c$ is the threshold for a family of groups or not. This is formulated precisely in Theorem~\ref{General}. The proof of this theorem will naturally split into two cases. The case where $p$ is below the threshold in which we show that the diameter of the random Cayley graph is with high probability greater than~$2$, and the case where $p$ is above the threshold in which we show that the diameter of the random Cayley graph is with high probability at most~$2$. We consider these cases in Sections~3 and~4 respectively. Finally in Section~5 we apply Theorem~\ref{General} to prove Theorems~\ref{SmallCase}-\ref{InterestingCase}. 

\section{The dependency graphs for distance greater than~2 from the identity}

Let $G$ be a finite group and let us denote its identity element by $1$. For each non-identity element $x$ of a group $G$, we define the graph $\Gamma_x$ whose vertices are the elements of $G$ and where $g$ is adjacent to $h$ if and only if $h \in \{xg,xg^{-1},x^{-1}g,x^{-1}g^{-1},gx,gx^{-1},g^{-1}x,g^{-1}x^{-1}\}$. Here we do allow loops but we ignore multiple edges.


The reason for the definition of this graph is the following: If $S$ is a subset of $G$ then $x$ is at distance greater than~2 from the identity in $\Gamma(G;S)$ precisely when $x,x^{-1} \notin S$ and $\Gamma_x[S]$ is independent. Indeed, $x$ is a neighbour of the identity if and only if $x \in S$ or $x^{-1} \in S$ and $x$ is at distance~2 from the identity if and only if $x,x^{-1} \notin S$ and furthermore there is a $y$ such that $1$ is a neighbour of $y$ and $y$ is a neighbour of $x$ in $\Gamma(G;S)$. The latter happens if and only if at least one of $y,y^{-1}$ and at least one of $yx^{-1},xy^{-1}$ belong to $S$. Setting $g$ to be equal to one of these four expressions and setting $h$ to be equal to one of the other two we get that this happens if and only if $h \in \{xg,xg^{-1},x^{-1}g,x^{-1}g^{-1},gx,gx^{-1},g^{-1}x,g^{-1}x^{-1}\}$. 

We now make a few simple observations about these dependency graphs
\begin{itemize}
\item[(O1)] $\Gamma_x$ has maximum degree at most $8$.
\item[(O2)] Every vertex of $\Gamma_x$ is incident to an edge which is not a loop.
\item[(O3)] $\Gamma_x$ has a loop at $y$ if and only if $x = y^2$ or $x = y^{-2}$.
\item[(O4)] For every two distinct elements $g,h$ of $G$ there are at most eight distinct $\Gamma_x$'s in which $g$ and $h$ are adjacent.
\end{itemize}

Given a graph $\Gamma$ with loops but no multiple edges we write $L(\Gamma)$ for the set of loops of $\Gamma$ and $E(\Gamma)$ for the set of edges of $\Gamma$ which are not loops. We also write $\ell(\Gamma)$ and $e(\Gamma)$ for the sizes of the corresponding sets.

For a group $G$, and an element $x$ of $G$ distinct from the identity we write
\[ g_G(x) = |G|^{-\frac{e(\Gamma_x)}{|G|}}.\]
Also, if $x^2=1$ then we set
\[ h_G(x) = e^{-\sqrt{\frac{\log{n}}{n}}\ell(\Gamma_x)}\]
while if $x^2 \neq 1$ then we set
\[ h_G(x) = e^{-2\sqrt{\frac{\log{n}}{n}}\ell(\Gamma_x)}.\]
Given a positive real number $c$ we write
\[ f_G(x;c) = g_G(x)^ch_G(x)^{\sqrt{c}}.\]
Finally, for a graph $G$ and a positive real number $c$ we write
\[ f_G(c) = \sum_{x \in G \setminus\{1\}} f_G(x;c).\]
As the group $G$ will usually be understood from the context, we will often ignore the dependence of $G$ and write $g(x),h(x),f(x;c)$ and $f(c)$ instead. It turns out that this last quantity is the one which determines the threshold for diameter~2 in random Cayley graphs. 

\begin{theorem}\label{General}
Let $c$ be a positive real number and let $(G_k)$ be a family of groups whose orders tend to infinity. Then $c$ is the threshold for diameter~2 for this family if and only if for every $\varepsilon > 0$ we have that $f_{G_k}(c-\varepsilon) \to \infty$ and $f_{G_k}(c+\varepsilon) \to 0$ as $k \to \infty$.
\end{theorem}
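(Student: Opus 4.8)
The plan is to prove Theorem~\ref{General} by the second moment method applied to the number of vertices at distance greater than~$2$ from the identity, exploiting the vertex-transitivity of Cayley graphs. Write $p = \sqrt{(c\pm\varepsilon)\frac{\log n}{n}}$ and let $X = X(S)$ be the number of $x \in G \setminus \{1\}$ with $\dist_{\Gamma(G;S)}(1,x) > 2$. By the discussion preceding the theorem, $x$ contributes to $X$ precisely when $x, x^{-1} \notin S$ and $S$ is independent in $\Gamma_x$. Since $\Gamma_x$ has bounded degree (O1), the events ``$z \in S$'' for $z$ ranging over $V(\Gamma_x)$ are independent, and a short computation gives $\Pr[x \text{ is far}] = (1-p)^{|\{x,x^{-1}\}|} \prod (\text{local factors over edges and loops of }\Gamma_x)$; up to lower-order terms this is $(|G|^{-e(\Gamma_x)/|G|})^{1+o(1)}$ times the loop correction, which after substituting $p$ and taking the appropriate power matches $f_G(x;c)^{1+o(1)}$. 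Hence $\E[X] = f_G(c \pm \varepsilon)^{1+o(1)}$, and the first ingredient is the identity $\dist(1) > 2$ in a vertex-transitive graph implies by a union bound over all vertices that $\Pr[\diam \leq 2] \to 1$ whenever $\E[X] \to 0$: this handles the upper-threshold direction, since $f_{G_k}(c+\varepsilon)\to 0$ forces $\E[X]\to 0$.

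For the lower-threshold direction I would show that $f_{G_k}(c-\varepsilon)\to\infty$ forces $\Pr[X > 0] \to 1$, so that with high probability some vertex is at distance $>2$ from~$1$ and the diameter exceeds~$2$. This requires controlling $\Var(X) = \sum_{x,y} \big(\Pr[x,y \text{ both far}] - \Pr[x\text{ far}]\Pr[y\text{ far}]\big)$. The key point is that the event ``$x$ far'' depends only on $S$ restricted to $V(\Gamma_x)$, a set of size at most $O(1)$ vertices (each $\Gamma_x$ has bounded degree and, crucially, every vertex lies on a non-loop edge by (O2), so the relevant vertex set is $V(\Gamma_x)$ which however has size $|G|$ --- so I must instead argue that for \emph{most} pairs $x,y$ the correlation is negligible). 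Concretely, I would split the covariance sum: the diagonal-ish terms where $\Gamma_x$ and $\Gamma_y$ share many vertices/edges, versus the bulk. Using (O4) --- for any two distinct $g,h$ there are at most eight $\Gamma_x$ in which they are adjacent --- one bounds the number of ``bad'' pairs and shows $\sum_{x,y} \Pr[x,y\text{ far}] \leq (1+o(1)) \E[X]^2 + (\text{small})$, whence $\Var(X) = o(\E[X]^2)$ and Chebyshev gives $X>0$ whp.

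The technical heart, and the step I expect to be the main obstacle, is making the asymptotic $\Pr[x\text{ far}] = f_G(x;c)^{1+o(1)}$ uniform enough, and handling the loop contributions correctly. A loop at $y$ in $\Gamma_x$ (equivalently $x = y^{\pm 2}$, by (O3)) does not impose an independence-type constraint between two distinct vertices but rather forbids a single vertex $y$ from lying in $S$; this is why $h_G(x)$ enters with the $\ell(\Gamma_x)$ exponent and why the cases $x^2 = 1$ versus $x^2 \neq 1$ are separated --- when $x^2 = 1$ the involution $y$ with $y^2 = x^{-2}=1$ issues collapse and one gets $\sqrt{\log n / n}$ rather than $2\sqrt{\log n/n}$ in the exponent, reflecting that $\{x,x^{-1}\} = \{x\}$ and the loop structure degenerates. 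Getting the bookkeeping of ``which vertices of $\Gamma_x$ must avoid $S$'' exactly right --- separating the $x, x^{-1}$ constraints, the edge constraints (at least one endpoint out of $S$ per edge), and the loop constraints, while avoiding double-counting vertices that play several roles --- is where the delicate estimation lies; once $\E[X] = f_G(c\pm\varepsilon)^{1+o(1)}$ is established cleanly, both directions follow from the first and second moment methods as above.
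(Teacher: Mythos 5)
Your outline gets the easy parts right (the reformulation via $\Gamma_x$, vertex-transitivity, and the fact that a first moment bound over all $x$ finishes the $f(c+\varepsilon)\to 0$ direction once a good per-$x$ upper bound is available), but both of the genuinely hard steps are assumed rather than proved. For the upper direction, your claimed identity $\Pr[x\text{ far}]=(1-p)^{|\{x,x^{-1}\}|}\prod(\text{local factors over edges and loops})$ is false: although the elements of $S$ are chosen independently, the per-edge events ``both endpoints of $e$ lie in $S$'' share vertices, so the probability that $S$ is independent in $\Gamma_x$ does not factor over the edges of $\Gamma_x$. A correlation inequality (Kleitman/Harris) gives that product only as a \emph{lower} bound, which is useless here: the direction $f_{G_k}(c+\varepsilon)\to 0\Rightarrow\mathrm{diam}\le 2$ needs an \emph{upper} bound $\Pr(d(1,x)>2)\le(1+o(1))f(x;c+\varepsilon/2)$, and establishing it is the main content of the paper's Section~4. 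There it is done with Janson's inequality, and since the natural events (``$y$ is a common neighbour of $1$ and $x$'') are not subset-events of $S$, the paper passes to the equivalence classes $[y]=\{y,y^{-1}\}$, chooses families $J,I_0,I_1,I_2$ satisfying (P1)--(P4), and needs the Table~1/Table~2 case analysis together with a random selection on the graph $R(x)$ to prove $|I_0|+2|I_1|+4|I_2|-|J|\ge(1+o(1))e(\Gamma_x)$; that inequality is exactly what makes the exponent match $g(x)^{c+\varepsilon}h(x)^{\sqrt{c+\varepsilon}}$. Calling this ``bookkeeping'' and asserting $\Pr[x\text{ far}]=f(x;c)^{1+o(1)}$ is assuming precisely the harder half of the theorem; your write-up does not even invoke a tool (Janson or otherwise) capable of producing an upper bound of this shape.

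The lower direction has a comparable gap. You propose Chebyshev applied to $X$ itself after a crude count of ``bad'' pairs via (O4). The paper explicitly warns that this fails: there are pairs $x,y$ for which $\Gamma_x$ and $\Gamma_y$ share $r=\Theta(n)$ edges, and then $\Pr(B_x\cap B_y)$ can exceed $\Pr(B_x)\Pr(B_y)$ by a factor of order $(1-p^2)^{-r}=n^{\Theta(1)}$; since the probabilities $\Pr(B_x)=n^{-\Theta(1)}$ vary wildly with $x$ and $\mathbb{E}X$ may be dominated by a small subset of terms, ``few bad pairs'' does not give $\Var(X)=o((\mathbb{E}X)^2)$ in general. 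The paper's fix is structural and none of it appears in your plan: a quantitative almost-independence lemma (Lemma~3.2, itself a delicate conditional argument using Kleitman's inequality and the events $C_i,D_i,X_i,Y_i$); an auxiliary graph $H$ joining $x$ to $y$ when $\Gamma_x,\Gamma_y$ share at least $n^{1-\delta}$ edges, whose maximum degree is at most $28n^{\delta}$ by (O4); a greedy independent set in $H$ taken in decreasing order of $f(x;c-\varepsilon/2)$; the absorption of the resulting $n^{-\delta}$ loss into the exponent using $e(\Gamma_x)\ge n/2$ (with $\delta=\varepsilon/8$, so $n^{-\delta}f(x;c-\varepsilon/2)\ge f(x;c-\varepsilon/4)$ and the restricted expectation still diverges); and a separate treatment, via a matching of size $n/8$ obtained from (O1)--(O2), of the $O(\log n)$ elements with $\ell(\Gamma_x)$ not $o(n/\log n)$. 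Chebyshev is then applied to the restricted count $Y$, not to $X$. Without these ingredients the variance bound you claim is not valid, so as it stands the proposal does not prove either implication of the theorem.
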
 

This is a direct consequence of the following two results.

\begin{theorem}\label{General-1}
Let $c$ be a positive real number and let $(G_k)$ be a family of groups whose orders tend to infinity. Suppose that for every $\varepsilon > 0$ we have that $f_{G_k}(c-\varepsilon) \to \infty$ as $k \to \infty$. Then for every $\varepsilon > 0$, with high probability the diameter of $\Gamma \in \mathcal{G}\left(G_k,\sqrt{\frac{(c-\varepsilon)\log{|G_k|}}{|G_k|}}\right)$, is greater than~2.
\end{theorem}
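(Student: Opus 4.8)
The goal is to prove Theorem~\ref{General-1}: under the hypothesis $f_{G_k}(c-\varepsilon)\to\infty$, the random Cayley graph $\Gamma\in\mathcal G(G_k,p)$ with $p=\sqrt{(c-\varepsilon)\log|G_k|/|G_k|}$ has diameter greater than $2$ whp. The natural strategy is the second-moment method applied to the number $X$ of non-identity elements $x$ that lie at distance greater than $2$ from the identity in $\Gamma$; since the graph is vertex-transitive, diameter $\le 2$ is equivalent to every vertex being at distance $\le 2$ from the identity, so it suffices to show $X\ge 1$ whp, i.e. $\Pr(X=0)\to 0$.

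First I would compute $\E X$. By the discussion preceding the observations (O1)--(O4), a fixed $x\neq 1$ is at distance $>2$ from the identity exactly when $x,x^{-1}\notin S$ and $S\cap V(\Gamma_x)$ induces an independent set in $\Gamma_x$. Writing $n=|G_k|$ and using independence of the choices of elements of $S$, I'd estimate $\Pr(x\text{ far})$ in terms of the structure of $\Gamma_x$: the factor from $x,x^{-1}\notin S$ contributes $(1-p)^{1}$ or $(1-p)^{2}$ according to whether $x^2=1$; each non-loop edge of $\Gamma_x$ forbids one of its two endpoints from being in $S$, and each loop at $y$ forbids $y\in S$ outright. A careful inclusion--exclusion / transfer estimate on the bounded-degree graph $\Gamma_x$ (degrees $\le 8$ by (O1)) shows $\Pr(x\text{ far})$ is, up to $1+o(1)$ factors, $(1-p)^{\Theta(n)}$ times corrections, and plugging $p=\sqrt{(c-\varepsilon)\log n/n}$ and expanding $\log(1-p)=-p-p^2/2-\cdots$ should reveal that $\Pr(x\text{ far})$ matches $f_{G_k}(x;c-\varepsilon)$ up to lower-order terms — the $g(x)$ part coming from $e(\Gamma_x)$ via the $p$ term, the $h(x)$ part from the loops $\ell(\Gamma_x)$ (with the factor $2$ for $x^2\ne1$ exactly mirroring the $(1-p)^2$ versus $(1-p)$ dichotomy), and the $p^2$-order contributions absorbed appropriately. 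Summing over $x$ gives $\E X = (1+o(1)) f_{G_k}(c-\varepsilon) \to \infty$.

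Next I would bound the variance, the crux being $\sum_{x\ne y}\big(\Pr(x,y\text{ both far}) - \Pr(x\text{ far})\Pr(y\text{ far})\big)$. The events ``$x$ far'' and ``$y$ far'' depend only on $S\cap V(\Gamma_x)$ and $S\cap V(\Gamma_y)$; these are independent whenever $V(\Gamma_x)\cap V(\Gamma_y)=\emptyset$, and in general the overlap is controlled because (by an argument like (O4), translated from edges to vertices) each vertex lies in only boundedly many of the $\Gamma_x$, so for a typical pair the graphs overlap in few vertices. I would split the off-diagonal sum by the size of $|V(\Gamma_x)\cap V(\Gamma_y)|$: pairs with no overlap contribute $0$; pairs with small overlap contribute a factor $1+o(1)$ times $\Pr(x\text{ far})\Pr(y\text{ far})$ because $|\Pr(x,y)-\Pr(x)\Pr(y)|$ is dominated by $(1-p)^{-O(\text{overlap})}$ corrections which are $1+o(1)$; and pairs with large overlap are so few (again by the bounded-multiplicity property) that even the trivial bound $\Pr(x,y\text{ far})\le\Pr(x\text{ far})$ makes their total contribution $o((\E X)^2)$. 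One should get $\Var X = o((\E X)^2)$, and then Chebyshev gives $\Pr(X=0)\le \Var X/(\E X)^2 \to 0$.

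The main obstacle is the asymptotic identification of $\Pr(x\text{ far})$ with $f_{G_k}(x;c-\varepsilon)$, uniformly enough in $x$ that the error terms survive summation — in particular correctly accounting for the contribution of the $p^2/2$ term in $\log(1-p)^{e(\Gamma_x)}$ (which is of the right order $\log n / n\cdot e(\Gamma_x)/n$, hence genuinely affects the exponent since $e(\Gamma_x)=\Theta(n)$), and checking that the independent-set constraint on the bounded-degree graph $\Gamma_x$ only perturbs the naive ``each edge kills a factor $(1-p)$'' estimate by a harmless $e^{O(np^2)} = e^{O(\log n)} = n^{O(1)}$-type factor that is absorbed into the $g(x)^{c-\varepsilon}$ versus $g(x)^{c-\varepsilon/2}$ slack. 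Handling the loops requires care too: a loop at $y$ means $y$ is altogether forbidden from $S$, contributing $(1-p)$ rather than affecting edge count, which is where the $h(x)$ factor and the $e^{-\sqrt{\log n/n}\,\ell(\Gamma_x)}$ shape come from. Once these estimates are in hand with the right uniformity, the second-moment bookkeeping is routine.
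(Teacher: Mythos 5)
Your overall plan (second moment on the number $X$ of vertices at distance greater than $2$ from the identity) is where the paper also starts, but the way you propose to control the correlations does not work, and this is exactly the difficulty the paper has to circumvent. First, the dependency structure is not governed by vertex overlaps: every $\Gamma_x$ has vertex set all of $G$ (by (O2) each element of $G$ is incident to a non-loop edge of $\Gamma_x$), so $V(\Gamma_x)\cap V(\Gamma_y)=G$ for every pair, and your claimed ``translation of (O4) to vertices'' is false --- each vertex lies in \emph{all} of the graphs $\Gamma_x$, not in boundedly many. The bounded-multiplicity property (O4) is genuinely about edges: a fixed pair $\{g,h\}$ is an edge of at most eight of the $\Gamma_x$'s. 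Accordingly, the correct measure of correlation between the events $B_x$ and $B_y$ is the number $r$ of \emph{common edges} of $\Gamma_x$ and $\Gamma_y$, and the paper proves (Lemma~\ref{almost-independence}, by a conditioning argument combined with Kleitman's inequality) that $\Pr(B_x\cap B_y)\leq(1+o(1))\Pr(B_x)\Pr(B_y)(1-p^2)^{-r}$, under the hypothesis that one of the two graphs has $o(n/\log n)$ loops; the few elements violating this are handled separately, using that their $\Gamma_y$ contains a matching of size $n/8$, so $\Pr(B_y)\leq(1-p^2)^{n/8}$.

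Second, even with the edge-based overlap measure, the direct Chebyshev argument on $X$ that you sketch cannot be pushed through: the paper explicitly notes that in general $\Var(X)\gg(\E X)^2$. The hypothesis only gives $f_{G_k}(c-\varepsilon)\to\infty$, possibly arbitrarily slowly, and $\E X$ may be dominated by comparatively few elements whose graphs share many edges with one another, so pairs with large overlap cannot be dismissed with ``they are so few that the trivial bound suffices''. The missing idea is a selection step: define an auxiliary graph $H$ on $G\setminus\{1\}$ joining $x$ and $y$ when $\Gamma_x$ and $\Gamma_y$ share at least $n^{1-\delta}$ edges; by (O1) and (O4) its maximum degree is $O(n^{\delta})$, so a greedy independent set $I$, chosen in decreasing order of $f(x;c-\varepsilon/2)$, retains at least a constant times $n^{-\delta}$ of the expectation, and this polynomial loss is absorbed by the slack in the exponent because $e(\Gamma_x)\geq n/2$ (with $\delta=\varepsilon/8$ one has $n^{-\delta}g(x)^{c-\varepsilon/2}\geq g(x)^{c-\varepsilon/4}$). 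For distinct $x,y\in I$ the correction factor $(1-p^2)^{-r}$ with $r<n^{1-\delta}$ is $1+o(1)$, and the second moment applied to the restricted count closes the argument. A smaller inaccuracy: you aim to identify $\Pr(B_x)=(1+o(1))f(x;c-\varepsilon)$, but the paper only proves, and only needs, the one-sided Kleitman lower bound $\Pr(B_x)\geq\tfrac12 f(x;c-\varepsilon/2)$, with constants and $(1-p)$-type corrections absorbed by the $\varepsilon$-slack; a matching per-element upper bound is neither established nor required.
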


\begin{theorem}\label{General-2}
Let $c$ be a positive real number and let $(G_k)$ be a family of groups whose orders tend to infinity. Suppose that for every $\varepsilon > 0$ we have that $f_{G_k}(c+\varepsilon) \to 0$ as $k \to \infty$. Then for every $\varepsilon > 0$, with high probability the diameter of $\Gamma \in \mathcal{G}\left(G_k,\sqrt{\frac{(c+\varepsilon)\log{|G_k|}}{|G_k|}}\right)$, is at most~2.
\end{theorem}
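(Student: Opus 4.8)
The plan is to run a second moment argument on the number of pairs $\{u,v\}$ that are at distance greater than $2$ in $\Gamma \in \mathcal{G}(G_k, p)$ with $p = \sqrt{(c+\varepsilon)\log n / n}$, $n = |G_k|$. By vertex-transitivity of Cayley graphs it suffices to bound, for each fixed $x \neq 1$, the probability $q_x$ that $x$ is at distance greater than $2$ from the identity, and then show $\sum_{x \neq 1} q_x \to 0$; a union bound over the $n(n-1)$ ordered pairs then finishes (we have one extra factor of $n$ to absorb, which is why the relevant quantity is $f_G$ rather than its $n$-fold reduction — indeed $g_G(x) = n^{-e(\Gamma_x)/n}$ already carries the needed logarithmic savings). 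First I would record, using the characterisation from Section~2, that $\{$$x$ at distance $>2$ from $1$$\}$ holds precisely when $x, x^{-1} \notin S$ and $S$ induces an independent set in $\Gamma_x$. So $q_x = \Pr[x, x^{-1} \notin S] \cdot \Pr[\Gamma_x[S] \text{ independent} \mid x,x^{-1}\notin S]$, and the first factor is $(1-p)^{1}$ or $(1-p)^2$ depending on whether $x^2 = 1$.

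The heart of the estimate is bounding $\Pr[\Gamma_x[S] \text{ is independent}]$ from above in terms of $e(\Gamma_x)$ and $\ell(\Gamma_x)$. Here I would use the standard fact (Janson-type / direct greedy argument) that for a graph $H$ on $n$ vertices with maximum degree bounded by a constant (here $\le 8$ by (O1)) and with $\ell$ loops, the probability that a $p$-random vertex subset is independent is at most $(1-p^2)^{e(H)/\Delta'} \cdot (1-p)^{\ell}$ for a suitable constant, or more cleanly $\exp(-\Theta(p^2 e(H)) - \Theta(p \ell))$; the loop at $y$ simply forces $y \notin S$ (since $\Gamma_x$ has a loop at $y$ iff $y^2 = x^{\pm 1}$, by (O3)), contributing a factor $(1-p)$. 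Plugging in $p^2 = (c+\varepsilon)\log n/n$ gives $(1-p^2)^{\Theta(e(\Gamma_x))} \approx n^{-\Theta((c+\varepsilon) e(\Gamma_x)/n)}$, which up to the constant and the $\varepsilon$ is exactly $g_G(x)^{c+\varepsilon}$; and $(1-p)^{\Theta(\ell(\Gamma_x))} \approx e^{-\Theta(\sqrt{\log n/n}\,\ell(\Gamma_x))}$, matching $h_G(x)^{\sqrt{c+\varepsilon}}$ (with the factor $2$ in the non-involution case coming from the fact that a loop at $y$ then also yields a loop at $y^{-1} \neq y$, doubling the constraint). The precise bookkeeping of constants — making sure the exponent that appears really is $c+\varepsilon$ and not some other multiple, i.e.\ that the bounded-degree independence bound is tight enough to not lose a constant factor in the exponent — is the one place that needs care, and is presumably why $e(\Gamma_x)$ is divided by $|G|$ in the definition of $g_G$ and why (O1), (O2) are isolated as observations. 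I would also need (O2) to ensure every vertex participates in a genuine constraint so that no $x$ is `free'.

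Having established $q_x \le C\, f_G(x; c+\varepsilon')$ for suitable $\varepsilon' < \varepsilon$ and an absolute constant $C$ (absorbing the constant-factor slack into the small decrease $\varepsilon \to \varepsilon'$, using $n^{-\varepsilon' e(\Gamma_x)/n}$ to kill any bounded multiplicative loss once $e(\Gamma_x) = \Theta(n)$, which again follows from (O2) and (O1) since $e(\Gamma_x) \ge n/16$ or so), summing over $x \ne 1$ gives $\sum_{x \ne 1} q_x \le C\, f_{G_k}(c + \varepsilon') \to 0$ by hypothesis. Multiplying by $n$ for the union bound over pairs: here I must be slightly more careful, since $f_G$ already is a sum of $n-1$ terms, not an average — but in fact the definition of $g_G(x) = n^{-e(\Gamma_x)/|G|}$ is engineered so that $n \cdot f_{G_k}(c+\varepsilon') $ is what one really controls: re-examining, the extra factor of $n$ from the union bound is absorbed by taking $\varepsilon'$ slightly smaller still and using $n \cdot n^{-\varepsilon'' e(\Gamma_x)/n} \le n \cdot n^{-\varepsilon'' /16} \to 0$ termwise while the remaining $f_{G_k}(c+\varepsilon) \to 0$ handles the sum. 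The main obstacle, then, is purely the constant-chasing in the independence-probability bound; the probabilistic skeleton (characterisation $\Rightarrow$ bounded-degree independence estimate $\Rightarrow$ first-moment/union bound) is routine.
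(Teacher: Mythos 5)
Your skeleton (vertex-transitivity, a union bound over $x\neq 1$ of $\Pr(d(1,x)>2)$, and the characterisation ``$x,x^{-1}\notin S$ and $\Gamma_x[S]$ independent'') matches the paper's, but the step carrying all the difficulty is missing. You bound the independence probability by a ``standard'' bounded-degree estimate of the form $(1-p^2)^{e(\Gamma_x)/\Delta'}$, i.e.\ $\exp(-\Theta(p^2e(\Gamma_x)))$, and claim the lost constant can be absorbed by shrinking $\varepsilon$. It cannot: a constant factor lost \emph{in the exponent} turns $g(x)^{c+\varepsilon}=n^{-(c+\varepsilon)e(\Gamma_x)/n}$ into $g(x)^{(c+\varepsilon)/\Delta'}$, which is the summand of $f$ evaluated at a parameter \emph{below} $c$; the hypothesis only gives $f_{G_k}(c+\varepsilon)\to 0$, and when $c$ is a genuine threshold one even has $f_{G_k}(c'')\to\infty$ for $c''<c$, so the resulting sum need not tend to $0$. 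Shrinking $\varepsilon$ absorbs multiplicative prefactors (absolute constants, even $n^{o(1)}$ factors, because $e(\Gamma_x)\geq n/2$), but not a change in the coefficient of $e(\Gamma_x)/n$. What is required is the sharp bound $\Pr(d(1,x)>2)\leq(1+o(1))\,g(x)^{c+\varepsilon/2}h(x)^{\sqrt{c+\varepsilon/2}}$, with coefficient exactly $1$ (up to $1+o(1)$) on $e(\Gamma_x)$; you mention ``Janson-type'' in passing but never set Janson up, and the estimate you actually invoke does not deliver this.

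Getting that coefficient is precisely where the paper spends its effort: Janson's inequality is applied to the path events $A_y$ re-encoded on the equivalence classes $\{y,y^{-1}\}$, and one must select a sub-family ($J,I_0,I_1,I_2$ satisfying (P1)--(P4)) with no repeated events, because distinct $y$'s frequently give literally the same event (the coincidences among $xy,xy^{-1},x^{-1}y,\dots$ catalogued in Tables~1 and~2); the heart of the proof is the verification, via the random independent-set choice in the coincidence graph $R(x)$, that the surviving weighted mass $|I_0|+2|I_1|+4|I_2|-|J|$ is still at least $(1+o(1))e(\Gamma_x)$. Your proposal does not engage with these coincidences, nor with the loop bookkeeping needed to recover exactly $h(x)^{\sqrt{c+\varepsilon}}$ (the factor $2$ for non-involutions). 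A leaner route in the spirit of your sketch would be to apply Janson directly to the element-level events ``both endpoints of a non-loop edge of $\Gamma_x$ lie in $S$'' together with ``a loop vertex lies in $S$'': these hypergraph edges are automatically distinct, the mean is $p^2e(\Gamma_x)$ plus the loop terms, and the correlation term is $O(np^3)+O(\ell(\Gamma_x)p^2)$, which is negligible --- but it is this sharp computation, not a $\Theta$-bound for independent sets in bounded-degree graphs, that must actually be written down and reconciled with the definitions of $g$ and $h$. Two smaller slips: no extra factor of $n$ is needed in the union bound, since by vertex-transitivity of each fixed Cayley graph the event ``diameter $>2$'' is already the union over $x\neq1$ of $\{d(1,x)>2\}$; and your absorption of that (spurious) factor via $n\cdot n^{-\varepsilon''/16}\to0$ is false for small $\varepsilon''$.
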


We proceed in the next two sections to prove these two theorems. The main ideas of the two proofs already appear in~\cite{CM2} and more specifically in the proofs of Theorems~1.3 and ~1.6 respectively.  

\section{Proof of Theorem~\ref{General-1}}

In the proof we will use the following result known as Kleitman's Inequality.

\begin{theorem}[Kleitman's Inequality]
Let $\Omega$ be a finite set and let $\{F_i\}_{i \in I}$ be subsets of $\Omega$, where $I$ is a finite index set. Let $R$ be a random subset of $\Omega$ and for each $i \in I$ let $E_i$ be the event that $F_i \subseteq R$. Then 
\[
\Pr\left(\bigcap_{i\in I} \overline{E_i} \right) \geqslant \prod_{i \in I} \Pr(\overline{E_i}).
\]
\end{theorem}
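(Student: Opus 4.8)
The plan is to recognise this statement as the Harris/FKG correlation inequality for decreasing events, and to prove it by a double induction. First note that each $E_i$ is an \emph{increasing} event: if $F_i\subseteq R$ and $R\subseteq R'$ then $F_i\subseteq R'$. Hence each $\overline{E_i}$ is a \emph{decreasing} event, and so is any intersection of decreasing events. We interpret ``random subset'' in the usual product-measure sense, namely that each element of $\Omega$ lies in $R$ independently of the others (each with its own probability); this covers the use in this paper, where $R$ is the generating set drawn from $\mathcal{G}(G,p)$.

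\textbf{Step 1 (reduction to two events).} I would prove by induction on $|I|$ that for any decreasing events $A_1,\dots,A_m$ on $2^{\Omega}$ one has $\Pr\bigl(\bigcap_i A_i\bigr)\geqslant\prod_i\Pr(A_i)$; applying this to $A_i=\overline{E_i}$ gives the theorem. The case $m=1$ is trivial. For $m>1$ put $A=\bigcap_{i<m}A_i$, which is again a decreasing event. Granting the two-event case proved in Step~2, $\Pr(A\cap A_m)\geqslant\Pr(A)\Pr(A_m)$, while the inductive hypothesis gives $\Pr(A)\geqslant\prod_{i<m}\Pr(A_i)$; multiplying the two inequalities yields the claim.

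\textbf{Step 2 (two decreasing events).} It remains to show that if $A,B$ are decreasing events on $2^{\Omega}$ then $\Pr(A\cap B)\geqslant\Pr(A)\Pr(B)$. This I would prove by induction on $n=|\Omega|$, the case $n=0$ being trivial. For the inductive step fix $\omega\in\Omega$, write $\Omega'=\Omega\setminus\{\omega\}$, $R'=R\cap\Omega'$, $p=\Pr(\omega\in R)$ and $q=1-p$. For $S\subseteq\Omega'$ and $j\in\{0,1\}$ set $a_0(S)=\mathbf{1}[S\in A]$, $a_1(S)=\mathbf{1}[S\cup\{\omega\}\in A]$, and define $b_0,b_1$ analogously for $B$. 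Since $A$ is decreasing, both $\{S\subseteq\Omega':S\in A\}$ and $\{S\subseteq\Omega':S\cup\{\omega\}\in A\}$ are decreasing families in $2^{\Omega'}$, and moreover $a_1\leqslant a_0$ pointwise; likewise for $B$. Writing $\alpha_j=\E[a_j(R')]$ and $\beta_j=\E[b_j(R')]$, we get $\Pr(A)=q\alpha_0+p\alpha_1$, $\Pr(B)=q\beta_0+p\beta_1$, and $\Pr(A\cap B)=q\,\E[a_0b_0]+p\,\E[a_1b_1]$. The inductive hypothesis on $\Omega'$, applied to the pair $(a_j,b_j)$ for each $j$, gives $\E[a_jb_j]\geqslant\alpha_j\beta_j$, hence $\Pr(A\cap B)\geqslant q\alpha_0\beta_0+p\alpha_1\beta_1$. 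A short computation now gives
\[
q\alpha_0\beta_0+p\alpha_1\beta_1-(q\alpha_0+p\alpha_1)(q\beta_0+p\beta_1)=pq(\alpha_0-\alpha_1)(\beta_0-\beta_1)\geqslant 0,
\]
since $\alpha_0\geqslant\alpha_1$, $\beta_0\geqslant\beta_1$ and $pq\geqslant 0$. This closes the induction and finishes the proof.

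The argument is essentially routine, so there is no serious obstacle; the only points needing care are in Step~2 — verifying that the two ``slices'' of a decreasing event are each decreasing on the smaller ground set (so that the inductive hypothesis applies to each slice separately), and checking the polynomial identity, whose sign is the crux of the whole proof. One could alternatively cite the four functions theorem, or simply quote the statement as a special case of the FKG inequality, but the self-contained induction above is the most economical.
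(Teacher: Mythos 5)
Your proof is correct. The paper itself does not prove this statement -- it simply cites Alon and Spencer -- so there is no in-paper argument to compare against; what you have written is the standard self-contained Harris-type proof (identify the $\overline{E_i}$ as decreasing events, reduce to two events, then induct on $|\Omega|$ by conditioning on one ground element), and every step checks out: the two slices of a decreasing event are decreasing on the smaller ground set, $a_1\leqslant a_0$ pointwise, and the identity $q\alpha_0\beta_0+p\alpha_1\beta_1-(q\alpha_0+p\alpha_1)(q\beta_0+p\beta_1)=pq(\alpha_0-\alpha_1)(\beta_0-\beta_1)$ is right. You were also right to make explicit the product-measure interpretation of ``random subset'': the inequality is false for an arbitrary distribution on $2^{\Omega}$, and the independence of the coordinates is exactly what lets you split $\Pr(A)=q\alpha_0+p\alpha_1$ and apply the inductive hypothesis to each slice. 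This is at least as elementary as the route in the cited reference (which derives Kleitman's lemma from the four functions theorem), so nothing further is needed.
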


We refer the reader to~\cite{AS} for a proof of the inequality and continue with the proof of Theorem~\ref{General-1}.

Let $G = G_k$ where $k$ is large enough so that all of the following calculations hold. We  write $n$ for the order of $G$.

For any element $x$ of $G$, let us write $B_x$ for the event that $x$ has distance greater than~2 from the identity. Suppose first that $x^2 \neq 1$. Then we have that
\[
\Pr(B_x) \geqslant (1-p)^2(1-p)^{2\ell(\Gamma_x)}(1-p^2)^{e(\Gamma_x)}
\]
This follows directly from Kleitman's Inequality since 
\[B_x = \overline{E_x} \cap \overline{E_{x^{-1}}} \bigcap_{\{y:y^2=x\}} (\overline{E_y} \cap \overline{E_{y^{-1}}}) \bigcap_{e \in E(\Gamma_x)} \overline{E_e} ,\] 
where for an element $z$ of $G$, $E_z$ denotes the event that $z$ appears in the generating set and for an edge $e$ of $\Gamma_x$, $E_e$ denotes the event that both of its incident vertices appear in the generating set. Thus, provided $k$ and therefore $n$ is large enough, we have
\begin{align*}
\Pr(B_x) &\geqslant \frac{1}{2}\exp\left\{-\frac{2p\ell(\Gamma_x)}{1-p}\right\}\exp\left\{-\frac{p^2e(\Gamma_x)}{1-p^2}\right\} \\
&= \frac{1}{2}h(x)^{\frac{\sqrt{c-\varepsilon}}{1-p}}g(x)^{\frac{c-\varepsilon}{1-p^2}} \\ 
&\geqslant \frac{1}{2}h(x)^{\sqrt{c-\varepsilon/2}}g(x)^{c-\varepsilon/2} = \frac{1}{2}f(x;c-\varepsilon/2).
\end{align*} 
Here we have made use of the inequality $(1-x) \geqslant e^{-\frac{x}{1-x}}$ which holds for all $x \in [0,1)$.

If $x^2 = 1$, then for each $y$ with $y^2 = x$ we have $y^{-2} = x^{-1} = x$ as well, so
\[ \bigcap_{\{y:y^2=x\}} (\overline{E_y} \cap \overline{E_{y^{-1}}}) = \bigcap_{\{y:y^2=x\}} \overline{E_y}.\]
So in this case we again have
\begin{align*}
\Pr(B_x) &\geqslant \frac{1}{2}\exp\left\{-\frac{p\ell(\Gamma_x)}{1-p}\right\}\exp\left\{-\frac{p^2e(\Gamma_x)}{1-p^2}\right\} \\
&= \frac{1}{2}h(x)^{\frac{\sqrt{c-\varepsilon}}{1-p}}g(x)^{\frac{c-\varepsilon}{1-p^2}} \\ 
&\geqslant \frac{1}{2}h(x)^{\sqrt{c-\varepsilon/2}}g(x)^{c-\varepsilon/2} = \frac{1}{2}f(x;c-\varepsilon/2)
\end{align*} 

Let us now write $X$ for the number of vertices of $G$ which are at distance greater than~2 from the identity. Then
\[ \mathbb{E}X = \sum_{x \neq 1} \Pr(B_x) \geqslant \frac{1}{2} f_G(c-\varepsilon/2) \to \infty.\]
However this is not enough to show that $X > 0$ with high probability. The reason is that the some pairs of these events are highly correlated. For this reason, in general we will have $\Var(X) \gg (\mathbb{E}X)^2$ and so the second moment method cannot be used directly. To overcome this difficulty, we will give a quantitative result of the fact that if $B_x$ and $B_y$ are highly correlated then the graphs $\Gamma_x$ and $\Gamma_y$ have many common edges. Having proved this result, the next task will be to cleverly choose a subset $I$ of the elements of $G$ such that the events $(B_x)_{x \in I}$ are `almost' independent and furthermore it is still the case that $\sum_{x \in I} \Pr(B_x) \to \infty$.

\begin{lemma}\label{almost-independence}
Let $x,y$ be distinct elements of $G$ such that the graphs $\Gamma_x$ and $\Gamma_y$ have exactly $r$ common edges. Suppose furthermore that either $\ell(\Gamma_x) = o(\frac{n}{\log{n}})$ or $\ell(\Gamma_y) = o(\frac{n}{\log{n}})$. Then
\[ \Pr(B_x \cap B_y) \leqslant (1+o(1))\Pr(B_x) \Pr(B_y) (1-p^2)^{-r}.\] 
\end{lemma}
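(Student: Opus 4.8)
The plan is to bound $\Pr(B_x \cap B_y)$ from above by using correlation inequalities, paralleling the lower-bound computation that preceded the lemma. Recall that $B_x$ is the intersection of events of the form $\overline{E_z}$ (a group element is absent from the generating set) and $\overline{E_e}$ (both endpoints of a non-loop edge $e$ of $\Gamma_x$ are present). Since all of these are decreasing events in the random set $S$, Harris's inequality (the FKG inequality for product measures) gives $\Pr(B_x \cap B_y) \leq$ — wait, that goes the wrong way; decreasing events are positively correlated, so Harris yields a lower bound on the intersection, not an upper bound. The right tool for the upper bound is to write $\Pr(B_x \cap B_y) = \Pr(B_x)\Pr(B_y \mid B_x)$ and estimate the conditional probability directly, exploiting that conditioning on $B_x$ only forces certain elements to be absent, which can only help the ``absence'' constraints in $B_y$ and has a controlled effect on the ``edge'' constraints in $B_y$.

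Concretely, I would first reduce $B_x$ and $B_y$ to their essential edge content. Observe that $B_x$ says $x, x^{-1} \notin S$, all square roots of $x^{\pm1}$ are outside $S$ (equivalently, $\Gamma_x$ has no loop whose vertex lies in $S$), and $\Gamma_x[S]$ spans no non-loop edge; the loop and vertex-absence conditions are what the hypothesis $\ell(\Gamma_x) = o(n/\log n)$ is there to control, since $p = \Theta(\sqrt{\log n / n})$ makes $p \cdot \ell(\Gamma_x) = o(1)$, so those constraints cost only a $(1+o(1))$ factor and can essentially be absorbed. The substantive content of $B_x$ is the event $A_x$ that $S$ is an independent set in the non-loop graph $E(\Gamma_x)$, and similarly $A_y$ for $E(\Gamma_y)$. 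So the task becomes: show $\Pr(A_x \cap A_y) \leq (1+o(1))\Pr(A_x)\Pr(A_y)(1-p^2)^{-r}$, after which the loop/vertex factors are glued back on (using that the relevant extra elements forced out by one event only raise the probability of the other, again by Harris applied to decreasing events, which this time is in our favour).

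For the core inequality, I would condition on $S \cap V$, where $V$ is the vertex set of $E(\Gamma_x)$, or better, use the following standard trick: the event $A_y$ depends on whether pairs of elements joined by edges of $\Gamma_y$ are both in $S$. Split the edges of $\Gamma_y$ into the $r$ edges shared with $\Gamma_x$ and the remaining $e(\Gamma_y) - r$ edges. Conditioning on $A_x$ makes $S$ sparse in a way that only decreases the chance any given edge of $\Gamma_y$ is fully occupied; more precisely, $A_x$ is a decreasing event and ``edge $e$ is occupied'' is increasing, so $\Pr(A_y \mid A_x) \geq \Pr(A_y)$ — again the wrong direction. Hence the clean route is instead to bound $\Pr(A_x \cap A_y)$ by inclusion of a larger product event: $A_x \cap A_y$ is contained in the event $A'$ that $S$ avoids all edges in the (multi)graph $E(\Gamma_x) \cup E(\Gamma_y)$ counted \emph{with multiplicity one}, which has $e(\Gamma_x) + e(\Gamma_y) - r$ edges. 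If those edges were vertex-disjoint we would get exactly $(1-p^2)^{e(\Gamma_x)+e(\Gamma_y)-r} = \Pr(A_x)\Pr(A_y)(1-p^2)^{-r}$ as an upper bound on $\Pr(A')$; in general $\Pr(A') \leq (1-p^2)^{\#\text{edges}}$ still holds by Harris, since $A'$ is an intersection of decreasing events and each individual $\overline{E_e}$ has probability $1-p^2$ — no, Harris gives the reverse inequality here too. The correct elementary fact is the \emph{Janson-type} or simple union-bound-free observation that for the absence of edges, $\Pr(\bigcap_e \overline{E_e}) \leq \min_e \Pr(\overline{E_e})$ trivially, which is far too weak.

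I will therefore commit to the conditioning approach done carefully. Write $\Pr(B_x\cap B_y)=\Pr(B_x)\Pr(B_y\mid B_x)$ and expose $S$ in two stages: first reveal $S_0 = S \cap (V(\Gamma_x)\cup V(\Gamma_y))$ restricted to the $2r$ or fewer vertices covered by the $r$ common edges, then the rest. The key point is that $B_y \setminus (\text{common part})$ is an event about edges of $\Gamma_y$ not shared with $\Gamma_x$, and on the ``non-shared'' vertices the randomness is untouched by conditioning on $B_x$, so those contribute their unconditional factor $(1-p^2)^{e(\Gamma_y)-r}$ up to the loop corrections; meanwhile the common $r$ edges are already handled inside $\Pr(B_x)$, and re-inserting them into the bound for $B_y$ is exactly what produces the compensating $(1-p^2)^{-r}$. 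The $(1+o(1))$ slack absorbs (a) the loop and element-absence conditions in $B_y$, whose total multiplicative cost is $\exp(O(p \cdot \ell(\Gamma_y)) + O(p)) = 1+o(1)$ by the hypothesis on $\ell(\Gamma_x)$ or $\ell(\Gamma_y)$, and (b) the at-most-$O(1)$ overlap corrections coming from vertices shared between non-common edges of $\Gamma_x$ and $\Gamma_y$, which is where bounded degree (O1) is used to keep everything local and finite. The main obstacle, and the step I expect to require the most care, is precisely this bookkeeping of shared vertices among the non-common edges: a vertex can lie on an edge of $\Gamma_x$ and a \emph{different} edge of $\Gamma_y$, creating a genuine (though $O(1)$-local, by bounded degree) correlation that must be shown not to spoil the product structure beyond a $(1+o(1))$ factor. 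I would handle it by grouping edges into connected components of the union graph, bounding each component's contribution by its product of $(1-p^2)$ factors times a constant depending only on the (bounded) component size, and checking the constants multiply out to $1+o(1)$ globally because the number of ``bad'' components where the constant is not exactly $1$ is itself $o(n/\log n)$ by a double-counting argument using (O4) together with the loop hypothesis.
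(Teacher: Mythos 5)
There is a genuine gap, and it sits exactly where the lemma's quantitative content lies. First, your claim that the loop and element-absence conditions can be absorbed into the $(1+o(1))$ factor rests on the assertion that $p\cdot\ell(\Gamma_y)=o(1)$. With $p=\Theta\bigl(\sqrt{\log n/n}\bigr)$ and $\ell(\Gamma_y)=o(n/\log n)$ the hypothesis only gives $p^2\ell(\Gamma_y)=o(1)$; the quantity $p\,\ell(\Gamma_y)$ can be nearly as large as $\sqrt{n/\log n}$. Consequently the loop conditions contribute a factor of order $e^{-\Theta(p\,\ell(\Gamma_y))}$ to $\Pr(B_y)$, which is very far from $1+o(1)$ and is precisely why $h(x)$ appears in $f$. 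Replacing $B_y$ by its ``non-loop core'' $A_y$ therefore does not lose a $(1+o(1))$ factor: it loses a factor that can be superpolynomially large, and the resulting bound in terms of $\Pr(A_y)$ would be strictly weaker than the lemma. The loop terms must \emph{cancel} between $\Pr(B_x\cap B_y)$ and $\Pr(B_x)\Pr(B_y)$, which forces one to control conditional ratios (as the paper does edge by edge, showing each loop edge changes $\Pr(B_x\mid\cdot)$ by at most a factor $1+O(p^2)$, so that only $p^2\ell(\Gamma_y)=o(1)$ is needed), not to discard the loops.

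Second, the core of your committed argument is not sound as stated. Conditioning on $B_x$ does \emph{not} leave the randomness untouched on the vertices outside the $r$ common edges: by (O2) every element of $G$ is incident to a non-loop edge of $\Gamma_x$, so $B_x$ constrains every vertex, and the non-common edges of $\Gamma_y$ interact with edges of $\Gamma_x$ at $\Theta(n)$ vertices, not at $o(n/\log n)$ of them. Your fallback of decomposing the union graph into components ``of bounded size'' also fails: bounded degree does not bound component size (components can have size $\Theta(n)$), so there is no per-component constant to multiply out. What actually rescues the estimate is not rarity of interactions but their cheapness: each interaction at a non-common edge costs only a factor $1+O(p^3)$ and each loop a factor $1+O(p^2)$, and one concludes because $np^3=o(1)$ and $p^2\ell(\Gamma_y)=o(1)$. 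Establishing those per-edge bounds is the real work (the paper does it by exposing the edges of $\Gamma_y$ one at a time and using Kleitman's inequality, together with the bounded-degree events $X_i,Y_i$, to lower bound $\Pr(B_x\cap D_{i-1}\cap\overline{C_i})$ by $p^2(1-p)^{28}\Pr(B_x\cap D_{i-1})$); your proposal, having correctly discarded the FKG routes that point the wrong way, never supplies a mechanism for these upper bounds on the conditional ratios, so the $(1+o(1))$ in the lemma is not actually proved.
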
  

\begin{proof}
We may assume that $\ell(\Gamma_y) = o(\frac{n}{\log{n}})$. Let us write $e_1,\ldots,e_r$ for the common edges of $\Gamma_x$ and $\Gamma_y$ and let $e_{r+1},\ldots,e_s$ be all other edges of $\Gamma_y$. By (O3) at most two of $e_1,\ldots,e_r$ are loops. For each $1 \leqslant i \leqslant s$, let $C_i$ be the event that not both vertices incident to $e_i$ appear in the generating set and let $D_i = \cap_{j=1}^i C_j$. Since $D_s = B_y$ then
\[
\Pr(B_x \cap B_y) = \Pr(B_x) \Pr(B_y) \frac{\Pr(B_x|B_y)}{\Pr(B_x)} = \Pr(B_x) \Pr(B_y) \prod_{i=1}^s \frac{\Pr(B_x|D_i)}{\Pr(B_x|D_{i-1})}
\]
where by convention $\Pr(B_x|D_0) := \Pr(B_x)$. 

By the law of total probability, for each $1 \leqslant i \leqslant s$ we have
\begin{align*}
\Pr(B_x|D_{i-1}) &= \frac{\Pr(B_x \cap D_{i-1})}{\Pr(D_{i-1})} = \frac{\Pr(B_x \cap D_{i-1} \cap C_i) + \Pr(B_x \cap D_{i-1} \cap \overline{C_i})}{\Pr(D_{i-1})} \\
&= \frac{\Pr(B_x | D_{i-1} \cap C_i)\Pr(D_{i-1} \cap C_i)}{\Pr(D_{i-1})} + \frac{\Pr(B_x \cap D_{i-1} \cap \overline{C_i})}{\Pr(D_{i-1})} \\
&= \Pr(B_x|D_i) \Pr(C_i|D_{i-1}) + \frac{\Pr(B_x \cap D_{i-1} \cap \overline{C_i})}{\Pr(D_{i-1})}.
\end{align*}

To bound the first term, we use Kleitman's Inequality to the events $\overline{C_i}$ and $\overline{D_{i-1}}$ to get 
\begin{align*}
\Pr(B_x|D_i) \Pr(C_i|D_{i-1}) &\geqslant \Pr(B_x|D_i)\Pr(C_i) \\
&= \begin{cases}
(1-p)\Pr(B_x|D_i) & \text{if $e_i$ is a loop,}\\
(1-p^2)\Pr(B_x|D_i) & \text{otherwise.}
\end{cases}
\end{align*}
To bound the second term, observe first that if $1 \leqslant i \leqslant r$ then $B_x \subseteq C_i$ and so $\Pr(B_x \cap D_{i-1} \cap \overline{C_i}) = 0$. If $r+1 \leqslant i \leqslant s$, let $X_i$ denote the event that no vertex adjacent to a vertex of $e_i$ in $\Gamma_x$ appears in the generating set and $Y_i$ the event that for each $j < i$ if $e_j$ meets $e_i$ then the vertex incident to $e_j$ but not $e_i$ is not in the generating set. Then, $B_x \cap D_{i-1} \cap \overline{C_i} = B_x \cap D_{i-1} \cap \overline{C_i} \cap X_i \cap Y_i$. Observe now that whether the two vertices incident to the edge $e_i$ appear in the generating set or not does not affect the outcome of the event $B_x \cap D_{i-1} \cap X_i \cap Y_i$, i.e. the events $B_x \cap D_{i-1} \cap X_i \cap Y_i$ and $\overline{C_i}$ are independent. Applying now Kleitman's Inequality to the events $\overline{B_x} \cup \overline{D_{i-1}},\overline{X_i}$ and $\overline{Y_i}$ we get
\begin{align*}
\Pr(B_x \cap D_{i-1} \cap \overline{C_i}) &= \Pr(\overline{C_i})\Pr(B_x \cap D_{i-1} \cap X_i \cap Y_i) \\
&\geqslant \Pr(\overline{C_i})\Pr(X_i)\Pr(Y_i)\Pr(B_x \cap D_{i-1})\\ &\geqslant
\begin{cases}
p(1-p)^{28}\Pr(B_x|D_{i-i})\Pr(D_{i-1}) & \text{if $e_i$ is a loop,}\\
p^2(1-p)^{28}\Pr(B_x|D_{i-i})\Pr(D_{i-1}) & \text{otherwise.}
\end{cases}
\end{align*}
To see the last inequality, recall that by (O1) $\Gamma_x$ and $\Gamma_y$ have maximum degree at most $8$ and so each of the events $X_i$ and $Y_i$ says that at most 14 elements of $G$ (at most seven for each vertex incident to $e_i$) do not appear in the generating set.

It follows that if $1 \leqslant i \leqslant r$ then $\Pr(B_x|D_i) \leqslant \Pr(B_x|D_{i-1})/(1-p^2)$ unless $e_i$ is a loop in which case we have $\Pr(B_x|D_i) \leqslant \Pr(B_x|D_{i-1})/(1-p)$. Also if $r+1 \leqslant i \leqslant s$ then
\[
\frac{\Pr(B_x|D_i)}{\Pr(B_x|D_{i-1})} \leqslant \frac{1 - p^2(1-p)^{28}}{1 - p^2} \leqslant 1 + 29p^3
\]
provided that $n$ is large enough unless $e_i$ is a loop in which case we have
\[
\frac{\Pr(B_x|D_i)}{\Pr(B_x|D_{i-1})} \leqslant \frac{1 - p(1-p)^{28}}{1 - p} \leqslant 1 + 29p^2
\] 
provided $n$ is large enough. So recalling that at most two edges $e_i$ with $1 \leqslant i \leqslant r$ are loops and at most $o(\frac{n}{\log{n}})$ edges are loops in general we get
\begin{align*}
\Pr(B_x \cap B_y) &\leqslant \Pr(B_x) \Pr(B_y)(1-p)^{-2}(1-p^2)^{-r}(1+29p^2)^{\ell(\Gamma_y)}(1 + 29p^3)^{4n} \\
&= (1 + o(1))\Pr(B_x) \Pr(B_y)(1-p^2)^{-r}
\end{align*}
as $p^2\ell(\Gamma_y) = o(1)$ and $p^3n = o(1)$.
\end{proof}

The next task is to choose a `large' set $I$ such that the events $B_x$ for $x \in I$ are `almost independent' in the sense of Lemma~\ref{almost-independence}. To this end we define a new graph $H$ with vertex set the elements of $G$ excluding the identity in which $x$ and $y$ are adjacent if and only if $\Gamma_x$ and $\Gamma_y$ have at least $n^{1 - \delta}$ common edges. (Later, we will set $\delta = \varepsilon/8$.) Since by (O1) $\Gamma_x$ has at most $4n$ edges and by (O4) every edge of $\Gamma_x$ belongs to at most $7$ other $\Gamma_y$'s, the degree of $x$ in $H$ is at most $28n^{\delta}$. Let us now write $x_1,\ldots,x_{n-1}$ for the vertices of $H$. We may assume that
\[ f(x_1;c-\varepsilon/2) \geqslant \cdots \geqslant f(x_{n-1};c-\varepsilon/2).\]
Now going through this order, we greedily pick an independent set $I$ of $H$ and let $Y$ be  the number of vertices $y \in I$ which are at distance greater than $2$ from the identity and furthermore they satisfy $\ell(\Gamma_y) = o(\frac{n}{\log{n}})$. Using (O3) we see that the number of $y$'s which do not satisfy $\ell(\Gamma_y) = o(\frac{n}{\log{n}})$ is $O(\log{n})$. For each such $y$, using (O2) we see that $\Gamma_y$ contains a matching of size at least $n/8$. Indeed if $M$ is a maximal matching of size $m$, then by (O2) every vertex outside $M$ has at least one neighbour inside $M$ and so there are at least $n-m$ edges with exactly one incident vertex inside $M$. On the other hand by (O1) every vertex inside $M$ has at most seven neighbours outside $M$ so there are at most $7m$ edges with exactly one incident vertex inside $M$. So $n-m \leqslant 7m$ which gives the claimed bound. Therefore, $P(B_y) \leqslant (1-p^2)^{n/8}$. In particular the sum over all such $y$ is $o(1)$. 

So since the maximum degree of $H$ is at most $28n^{\delta}$ this way of picking the elements of $I$ guarantees that
\[ \mathbb{E}Y \geqslant \frac{1}{2(28n^{\delta}+1)} f_G(c - \varepsilon/2) - o(1) \geqslant \frac{n^{-\delta}}{60}f_G(c - \varepsilon/2) - o(1) \]
But for each $x$ we have
\[
n^{-\delta}g(x)^{c - \varepsilon/2} = n^{-\delta - (c-\varepsilon/2)\frac{e(\Gamma_x)}{n}} \geqslant n^{-(c + 2\delta-\varepsilon/2)\frac{e(\Gamma_x)}{n}}
\]
since by (O2) we have $e(\Gamma_x) \geqslant n/2$. Thus, by taking $\delta = \varepsilon/8$ we obtain
\[ n^{-\delta}g(x)^{c - \varepsilon/2} \geqslant g(x)^{c - \varepsilon/4}\]
and so
\[ n^{-\delta}f(x,c - \varepsilon/2) \geqslant f(x,c - \varepsilon/4).\]
Therefore we get that
\[ \mathbb{E}Y \geqslant \frac{1}{60}f_G(c - \varepsilon/4) \to \infty. \]
Since also $\E Y(Y-1) \leqslant (1 + o(1))(\E Y)^2$ by Chebyshev's inequality we get
\[
\Pr(Y = 0) \leqslant \frac{\Var(Y)}{(\E Y)^2} = \frac{\E Y(Y-1)}{(\E Y)^2} - 1 + \frac{1}{\E Y} = o(1),
\]
from which we deduce that with high probability there are vertices of $\Gamma$ which are at distance greater than~2 from the identity, thus completing the proof of Theorem~\ref{General-1}.

\section{Proof of Theorem~\ref{General-2}}

Let $G$ be a group from the family $(G_k)$ and let us write $1$ for its identity element and $n$ for its order. Since the graph $\Gamma$ is vertex-transitive it is enough to show that with high probability every element of $G$ is at distance at most~2 from the identity. Now fix a non-identity element $x$ of $G$ and for each $y \neq 1,x$ let us denote by $A_y:=A_y(1,x)$ the event that the edges between $1$ and $y$ and between $x$ and $y$ both appear in $\Gamma$. So we have
\[ \Pr(d(1,x) > 2) \leqslant \Pr \left( \bigcap_{y \neq 1,x} \overline{A_y} \right).\]
The events $A_y$ are not independent and so to estimate the right hand side of the above inequality we will make use of the following inequality of Janson. We refer the reader to~\cite{AS} for its proof. 

\begin{theorem}[Janson's Inequality]
Let $\Omega$ be a finite set and let $\{F_i\}_{i \in I}$ be subsets of $\Omega$, where $I$ is a finite index set. Let $R$ be a random subset of $\Omega$ and for each $i \in I$ let $E_i$ be the event that $F_i \subseteq R$. Suppose also that $\Pr(E_i) \leqslant \eps$ for each $i \in I$. Then
\[
\Pr\left(\bigcap_{i\in I} \overline{E_i} \right) \leqslant \exp\left(-\sum_{i \in I} \Pr(E_i) + \frac{1}{1-\varepsilon}\sum_{i \in I}\sum_{\{j \neq i : F_i \cap F_j \neq \emptyset\}} \Pr(E_i \cap E_j) \right).
\]
\end{theorem}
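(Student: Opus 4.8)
The statement to be proved is Janson's Inequality, and the plan is to carry out the classical telescoping argument. First I would fix an arbitrary ordering $I = \{1,\dots,m\}$ of the index set and write $\mu = \sum_{i \in I}\Pr(E_i)$ and, for each $i$, $N_i = \{\, j < i : F_i \cap F_j \neq \emptyset \,\}$. Setting $D_i = \bigcap_{j<i}\overline{E_j}$ (with $D_1$ the whole probability space), the multiplication rule for conditional probabilities gives
\[
\Pr\Big( \bigcap_{i \in I} \overline{E_i} \Big) \;=\; \prod_{i=1}^{m} \Pr\big( \overline{E_i} \mid D_i \big),
\]
so the task reduces to bounding each factor from above, equivalently to bounding $\Pr(E_i \mid D_i)$ from below, and then assembling the product.

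The heart of the argument, and the step I expect to need the most care, is the per-index estimate
\[
\Pr\big( E_i \mid D_i \big) \;\geq\; \Pr(E_i) \;-\; \sum_{j \in N_i} \Pr(E_i \cap E_j).
\]
To prove this I would split $D_i = D_i' \cap D_i''$, where $D_i' = \bigcap_{j \in N_i}\overline{E_j}$ records the ``neighbours'' of $i$ and $D_i'' = \bigcap_{j<i,\, j \notin N_i}\overline{E_j}$ records the rest. Since $\Pr(D_i' \mid D_i'') \leq 1$ one gets $\Pr(E_i \mid D_i) \geq \Pr(E_i \cap D_i' \mid D_i'')$, and a union bound on $\overline{D_i'} \subseteq \bigcup_{j \in N_i} E_j$ gives $\Pr(E_i \cap D_i' \mid D_i'') \geq \Pr(E_i \mid D_i'') - \sum_{j \in N_i}\Pr(E_i \cap E_j \mid D_i'')$. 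Two observations then finish the step: first, $E_i$ depends only on whether the elements of $F_i$ were chosen, and those are disjoint from every element relevant to $D_i''$ (because $F_j \cap F_i = \emptyset$ for $j \notin N_i$), so $E_i$ is independent of $D_i''$ and $\Pr(E_i \mid D_i'') = \Pr(E_i)$; second, $E_i \cap E_j$ is an increasing event while $D_i''$ is a decreasing event, so by the Harris inequality for product measures — the companion of the Kleitman inequality stated above — conditioning on $D_i''$ cannot raise the probability of $E_i \cap E_j$, i.e. $\Pr(E_i \cap E_j \mid D_i'') \leq \Pr(E_i \cap E_j)$.

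Everything after that is routine bookkeeping. The displayed estimate gives $\Pr(\overline{E_i} \mid D_i) \leq 1 - \Pr(E_i) + \sum_{j \in N_i}\Pr(E_i \cap E_j)$, and since $\Pr(E_i) \leq \varepsilon$ and $1 + t \leq e^{t}$ this is at most $(1 - \Pr(E_i))\exp\big( \tfrac{1}{1-\varepsilon}\sum_{j \in N_i}\Pr(E_i \cap E_j) \big)$. Multiplying over $i$, using $\prod_i (1 - \Pr(E_i)) \leq e^{-\mu}$, and then enlarging $\sum_i \sum_{j \in N_i}$ to the full ordered sum over $\{(i,j) : j \neq i,\ F_i \cap F_j \neq \emptyset\}$ (simply dropping the constraint $j<i$, which only weakens the bound), one lands exactly on the asserted inequality; note the final bound no longer depends on the ordering chosen at the outset. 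So the whole proof rests on getting the decomposition $D_i = D_i' \cap D_i''$ right, after which the disjoint-support independence and the Harris correlation inequality do the work and the remaining manipulations are elementary.
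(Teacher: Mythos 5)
Your proof is correct. The paper itself does not prove this statement --- it defers to the reference \cite{AS} --- and your argument is precisely the standard Boppana--Spencer proof given there: telescoping over $\Pr(\overline{E_i}\mid D_i)$, splitting $D_i$ into the neighbour part $D_i'$ and the independent part $D_i''$, and using independence together with the Harris correlation inequality, followed by the elementary estimate $1-\Pr(E_i)+S\leq(1-\Pr(E_i))\exp\bigl(S/(1-\varepsilon)\bigr)$ valid because $\Pr(E_i)\leq\varepsilon$. Nothing further is needed.
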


We will not apply this result directly to all of the events $A_y$ but only to a specially chosen subset of them.

We start by defining an equivalence relation on $G$ by letting $y$ be equivalent to $z$ if and only if $y=z$ or $y=z^{-1}$. We write $[y]$ for the equivalence class of $y$ and we let $\Omega$ be the set of all equivalence classes. We let $R$ be a random subset of $\Omega$ where each equivalence class is chosen independently with probability $p$ or $2p-p^2$ depending on whether the equivalence class contains one or two elements. For each element $i$ of $G$ we let $F_i = F_i(x) = \{[i],[xi^{-1}]\}$ and we let $E_i$ be the event that $F_i \subseteq R$. For any $I \subseteq V(G) \setminus \{1,x\}$ we have
\[ \Pr(d(1,x) > 2) \leqslant \Pr \left( \bigcap_{y \neq 1,x} \overline{A_y} \right) \leqslant \Pr \left( \bigcap_{i \in I} \overline{E_i} \right).\]

Now let $I = J \cup I_0 \cup I_1 \cup I_2$ be any subset of $V(G) \setminus \{1,x\}$ with the following properties.
\begin{itemize}
\item[(P1)] For each $i \in J$ we have $|F_i| = 1$.
\item[(P2)] For each $i \in I_0 \cup I_1 \cup I_2$ we have $|F_i| = 2$. Furthermore exactly $t$ of the equivalence classes in $F_i$ have size $2$ if and only if $i \in I_t$.
\item[(P3)] For each $j \in J$ and $i \in I_0 \cup I_1 \cup I_2$ we have that $F_i \cap F_j = \emptyset$.
\item[(P4)] For each distinct $i,j \in I_0 \cup I_1 \cup I_2$ we have $F_i \neq F_j$.
\end{itemize}

Here the sets $I,J,I_0,I_1,I_2$ depend on $x$ and later we will have such sets for each $x$. In order to distinguish between them, we will sometimes write $I(x),J(x),I_0(x),I_1(x)$ and $I_2(x)$ instead. However if we feel that there is no danger of confusion we will choose to drop the dependence on $x$.

Suppose now that we are given such sets satisfying properties (P1)-(P4). Observe that for each $i \in J$ we have $i^2 = x$, so $i \neq i^{-1}$ and therefore $\Pr(E_i) = 2p-p^2$. Observe also that for each $i \in I_0$ we have $\Pr(E_i) = p^2$, for each $i \in I_1$ we have $\Pr(E_i) = p(2p-p^2)$ and for each $i \in I_2$ we have $\Pr(E_i) = (2p-p^2)^2$. In particular for each $t \in \{0,1,2\}$ and each $i \in I_t$ we have $\Pr(E_i) = 2^tp^2 + O(p^3)$. Finally observe that for each $i$ there are at most $6$ other $j$'s for which $F_i \cap F_j \neq \emptyset$. Indeed $F_i \cap F_j \neq \emptyset$ if and only if $[j] = [i]$ or $[j] = [xi^{-1}]$ or $[xj^{-1}] = [i]$ or $[xj^{-1}] = [xi^{-1}]$ which (for $j \neq i$) happens if and only if $j \in \{i^{-1},xi^{-1},ix^{-1},ix,ix^{-1},xi^{-1}x\}$.

So applying Janson's inequality with this set $I$ we get
\[ \Pr(d(1,x) > 2) \leqslant \exp\left(-2p|J| + p^2|J| - p^2|I_0| - 2p^2|I_1| - 4p^2|I_2| + O(np^3) \right).\]

We continue by choosing the sets $J(x)$ that we are going to use. To this end let $K(x) = \{i \in G:i^2=x\}$ and observe that for (P1) to hold we must have $J(x) \subseteq K(x)$.  If $x^2 \neq 1$ we set $J(x) = K(x)$, while if $x^2 = 1$ then we choose $J(x)$ such that it contains exactly one element from each pair $\{i,i^{-1}\}$ contained in $K$. Note that all these pairs really have size two since if it was the case that $i = i^{-1}$ then we would have $i^2 = 1 \neq x$ and so $i \notin K(x)$. So with this definition, in the first case we have $|J(x)| = \ell(\Gamma_x)$ while in the second case we have $|J(x)| = \ell(\Gamma_x)/2$. In particular, in both cases we get
\[
\Pr(d(1,x) > 2) \leqslant (1+o(1))h(x)^{\sqrt{c+\varepsilon}}n^{-(c+\varepsilon)(|I_0|+2|I_1| +4|I_2| - |J|)/n}.
\]
Suppose now that for each $x$ we could find $I_0(x),I_1(x),I_2(x)$ such that properties (P2)-(P4) are satisfied and furthermore we have that
\begin{equation}\label{EQ1}
|I_0(x)| + 2|I_1(x)| + 4|I_2(x)| - |J(x)| \geqslant (1+o(1))e(\Gamma_x).
\end{equation}
Then we would get 
\[
\Pr(d(1,x) > 2) \leqslant (1+o(1))f(x;c+\varepsilon/2)
\]  
and so the probability that there is an $x$ with distance greater than~2 from the identity would be at most 
\[
\sum_x \Pr(d(1,x) > 2) \leqslant (1+o(1))f(c + \varepsilon/2) \to 0
\]  
and therefore by the first moment method with high probability the diameter of $\Gamma$ would be at most~2. 

In fact, because for each $x$ for which $|J(x)| \geqslant n^{2/3}$ we have that
\[ 
\Pr(d(1,x) > 2) \leqslant (1+o(1))\exp\{-(2p-p^2)|J(x)|\} \leqslant (1+o(1))e^{-p|J(x)|} = o(1/n)
\]
it is enough to find sets $I_0(x),I_1(x),I_2(x)$ satisfying properties (P2)-(P4) and~\eqref{EQ1} only for those $x$ for which $|J(x)| < n^{2/3}$. Furthermore in this case we don't even need to satisfy property $(P3)$. Indeed as we observed earlier, for each $j \in J(x)$ there are at most six other $i$'s with $F_i \cap F_j \neq \emptyset$. So having found sets $I_0(x),I_1(x),I_2(x)$ satisfying properties (P2),(P4) and~\eqref{EQ1} by removing at most $6|J| = o(n)$ elements from each of them we will get sets $I_0'(x),I_1'(x),I_2'(x)$ satisfying (P2)-(P4) and since $e(\Gamma_x) \geqslant n/2$, the inequality~\eqref{EQ1}
will still be satisfied.

So pick an element $x$ of $G$ which is different from the identity and assume that $|J(x)| < n^{2/3}$. Let us pick an element $y$ of $G$ distinct from the identity for which $y^2 \neq x$ and $y^{-2} \neq x$. The condition $|J(x)| < n^{2/3}$ guarantees that all but $o(n)$ elements $y$ satisfy these properties. Our first task is to understand how many neighbours does $y$ have in the graph $\Gamma_x$. Recall that $N(y) = \{xy,xy^{-1},x^{-1}y,x^{-1}y^{-1},yx,y^{-1}x,yx^{-1},y^{-1}x^{-1}\}$. So we want to find the size of this set and this of course depends on certain relations that $x$ and/or $y$ may satisfy. All the information required is conveyed in Table~\ref{T1}.

\begin{table}[h]
\begin{center}
\begin{tabular}{|c|c||c|c|c|c|c|c|}
\hline
R1 & $x^2=1$ & $1,3$ & $2,4$ & $5,7$ & $6,8$ & $B,E$ & $B=E$\\ \hline
R2 & $y^2=1$ & $1,2$ & $3,4$ & $5,6$ & $7,8$ & $\ast$ & $B=C$ \\ \hline
R3 & $xy=yx$ & $1,5$ & $2,6$ & $3,7$ & $4,8$ & $C,D$ & $C=D$\\ \hline
R4 & $(xy)^2=x^2$ & $1,6$ & $2,5$ & $3,8$ & $4,7$ & $B,D$ & $B=D$\\ \hline
R5 & $(xy)^2=y^2$ & $1,7$ & $2,8$ & $3,5$ & $4,6$ & $A,F$ & $A=F$\\ \hline
R6 & $(xy)^2=1$ & $1,8$ & $4,5$ &  & & & \\ \hline
R7 & $(xy^{-1})^2=1$ & $2,7$ & $3,6$  &  & & $\ast$ & $D=E$\\ \hline
R8 & $x^2=y^2$ & $2,3$ & $6,7$ &  & &  $C,E$ & $C=E$\\ \hline
R9 & $x^2=y^{-2}$ & $1,4$ & $5,8$ & & & & \\ \hline
\end{tabular}
\caption{}
\label{T1}
\end{center}
\end{table}  

We can use Table~\ref{T1} to interpret in which instances the possible neighbours of $y$ are actually equal. Here we list the possible neighbours in the order $xy$, $xy^{-1}$, $x^{-1}y$, $x^{-1}y^{-1}$, $yx$, $y^{-1}x$, $yx^{-1}$, $y^{-1}x^{-1}$. For example, the first row of the table says that whenever $x^2=1$ then the first expression $xy$ is equal to the third expression $x^{-1}y$, the second is equal to the fourth and so on. The last two columns of Table~\ref{T1} will be explained later. We can now use Table~\ref{T1} to construct Table~\ref{T2}. 

\begin{table}[h]
\begin{center}
\begin{tabular}{|c|c||c|c|c|}
\hline
1 & R1,R2,R3 & $12345678$ & $\ast \ast$ & $BCDE$\\ \hline
2 & R1,R2 & $1234|5678$ & $\ast$ & $BCE$\\ \hline
3 & R1,R3 & $1357|2468$ & & $AF|BE|CD$ \\ \hline
4 & R1,R4 & $1368|2457$ & $\ast$ & $BDE$ \\ \hline
5 & R1 & $13|24|56|78$ & & $BE$ \\ \hline
6 & R2,R3 & $1256|3478$ & $\ast$ & $BCD$\\ \hline
7 & R2,R5 & $1278|3456$ & $\ast \ast$ & \\ \hline
8 & R2 & $12|34|56|78$  & $\ast$ &\\ \hline
9 & R3,R6,R7 & $1458|2367$  & $\ast$ & $CDE$ \\ \hline
10 & R3,R6 & $1458|26|37$  & & $CD$ \\ \hline
11 & R3,R7 & $15|48|2367$  & $\ast$& $CDE$ \\ \hline
12 & R3 & $15|26|37|48$  & & $CD$ \\ \hline
13 & R4,R5 & $1467|2358$ & & $AF|BD|CE$ \\ \hline 
14 & R4 & $16|25|38|47$  & & $BD$ \\ \hline
15 & R5 & $17|28|35|46$  & & $AF$ \\ \hline
16 & R6,R7 & $18|27|36|45$ & $\ast$ &\\ \hline 
17 & R6,R8 & $18|23|45|67$ & & $CE$ \\ \hline
18 & R6 & $18|45|2|3|6|7$  & &\\ \hline
19 & R7,R9 & $14|27|36|58$ & $\ast$ & \\ \hline 
20 & R7 & $1|4|5|8|27|36$  & $\ast$ &\\ \hline
21 & R8,R9& $14|23|58|67$  & & $CE$ \\ \hline
22 & R8& $1|4|5|8|23|67$   & & $CE$ \\ \hline
23 & R9& $14|58|2|3|6|7$ & & \\ \hline
24 & & $1|2|3|4|5|6|7|8$  & &\\ \hline
\end{tabular}
\caption{}
\label{T2}
\end{center}
\end{table} 
  
From Table~\ref{T2} we can read how many neighbours each $y$ has depending on which relations $x$ and $y$ satisfy. For example the third row says that if $x$ and $y$ satisfy the relations in rows 1 and 3 of Table~\ref{T1} and all the relations that can be deduced by them but no relation from any other row, then $y$ has exactly two neighbours. This is because the first, third, fifth and seventh expression in the above list are all equal and also the second, fourth, sixth and eight relation are again all equal but distinct from the others (as no other relation holds). The last two columns of Table~2 will be discussed later. We now explain how the second of Table~\ref{T2} is constructed. (The construction of the third column from the second is obvious.) In principle, Table~{T2} should have $2^9$ rows, one for each subset of relations that appear in Table~\ref{T1}. However since some of these relations together imply some others we can cut this down a lot. We start by setting all relations one by one to be true until they force all other relations to be true. For example, once the relations in rows R1-R3 of Table~\ref{T1} are true, then so are all the other. This creates the (second column of the) first row of Table~\ref{T2}. Having constructed the $k$-th row of Table~\ref{T2} we construct its $(k+1)$-th row as follows: If  in the second column of the $k$-th row we have the elements $Ri_1,\cdots,Ri_s$ with $i_1 < \cdots < i_s$, then for the $(k+1)$-th row we set the relations $Ri_1,\ldots,Ri_{s-1}$ as true, all other relations $Ri_t$ with $t \leqslant s$ as false and then we continue by setting all other relations one by one as true until these force the rest to be either true or false.  

Recall that we do the above calculations only for elements $y$ for which $x \neq y^2,y^{-2}$. So when computing $E(\Gamma_x)$ we will miss all neighbours of such $y$'s. However we already observed that there are at most $2|J(x)| < 2n^{2/3} = o(n)$ such $y$ so the total contribution that we miss is only $o(E(\Gamma_x))$.

Our second task now is for each $y$ with $y \neq 1,x,x^{-1}$ and $y^2,y^{-2} \neq x$ to understand the sizes of the equivalence classes of $F_y$ and also how many distinct $z$'s are there for which $F_y \neq F_z$. We begin by recalling that $F_y \cap F_z$ is not empty only in the cases where $z=y^{-1},yx,y^{-1}x,xy^{-1},yx^{-1},xy^{-1}x$ in which cases we have $F_{y^{-1}} = \{[y],[xy]\}$, $F_{yx} = \{[y],[yx]\}$, $F_{y^{-1}x} = \{[y],[y^{-1}x]\}$, $F_{xy^{-1}} = \{[xyx^{-1}],[xy^{-1}]\}$, $F_{yx^{-1}} = \{[x^2y^{-1}],[xy^{-1}]\}$ and $F_{xy^{-1}x} = \{[xy^{-1}x],[xy^{-1}]\}$. Let us write $A = y^{-1}, B=yx,\ldots,F=xy^{-1}x$. Now let us go back to Table~1. The last column denotes equalities between $A,B,\ldots,F$. For example the first row says that whenever $x^2=1$ then $B=E$ (i.e.~$yx = yx^{-1}$) and no other relation must necessarily hold. The second to last column says for which $z$ we actually have $F_y = F_z$. For example again from the first row we read that if $x^2 = 1$ then we must have $F_y = F_B = F_E$ but no other equality must necessarily hold. The stars in the second and seventh row denote the fact that some of the equivalence classes of $F_y$ will have size one instead of two. For example if $y^2=1$ then the class $[y]$ of $F_y$ has size one. 

We now discuss the last two columns of Table~2. The second to last column denotes how many equivalence classes of $F_y$ have size one. The number of them is the number of stars appearing in this column. The last column denotes which $F_z$'s are neighbours to $F_y$. For example in the first row we have that all $F_B,F_C,F_D,F_E$ are equal to $F_y$ and moreover we actually have $B=C=D=E$. While in the third row we see that all $F_A,\ldots,F_F$ are equal to $F_y$ but in fact the only equalities between $A,B,\ldots,F$ are $A=F,B=E$ and $C=D$.

Let us now define a graph $R(x)$ on the set of all $y$ with $y\neq 1,x,x^{-1}$ and $x\neq y^{2},y^{-2}$ in which we join $y$ to $z$ if and only if $F_y = F_z$. We see from Table~2 that this graph is a union of disjoint $K_2$'s and $K_4$'s. To satisfy conditions (P2) and (P4) it is necessary and sufficient the the elements we pick to form $I_0(x) \cup I_1(x) \cup I_2(x)$ are an independent set of $R(x)$. We now go through each $K_2$ and $K_4$ and pick one element at random. We calculate $\mathbb{E}(|I_0(x)| + 2|I_1(x)| + 4|I_2(x)|)$. The first row of Table~\ref{T2} says that each $y$ of this type contributes $\frac{1}{2} \cdot 1 = \frac{1}{2}$ to this expectation. Here the $1/2$ comes from the fact that this element is chosen with probability $1/2$, while the $1$ comes from the fact that this element will belong to $I_0(x)$. Because every such $y$ has exactly one neighbours, it also contributes $1/2$ to the count of $E(\Gamma_x) = \sum_y |N_{\Gamma_x}(y)|/2$. We now go through each row and we discover that the contribution to $\mathbb{E}(|I_0(x)| + 2|I_1(x)| + 4|I_2(x)|)$ in each one of them is equal to the contribution to $E(\Gamma_x)$ unless in the following instances: 
\begin{itemize}
\item In row 10 we get a contribution of $2$ instead of $3/2$.
\item In row 11 we get a contribution of $1$ instead of $3/2$.
\item In row 18 we get a contribution of $4$ instead of $3$.
\item In row 20 we get a contribution of $2$ instead of $3$.
\item In row 22 we get a contribution of $2$ instead of $3$.
\item In row 23 we get a contribution of $4$ instead of $3$.
\end{itemize}
However, the number of $y$'s which satisfy the relations of row 10 is equal to the number of $y$'s which satisfy the relations of row 11. Indeed one can check that $y$ satisfies the relations of row 10 if and only if $y^{-1}$ satisfies the relations of row 11. Similarly, again under the transformation $y \mapsto y^{-1}$, we see that the number of $y$'s which satisfy the relations of row 18 is the same as those which satisfy the relations of row 20 and the same happens with rows 22 and 23. So we have
\[ \mathbb{E}(|I_0(x)| + 2|I_1(x)| + 4|I_2(x)|) \geqslant (1+o(1))E(\Gamma_x)\]
where the $1+o(1)$ factor comes from the fact that we did not consider $y$'s with $x=y^2$ or $x=y^{-2}$. In particular there is a choice of $y$'s for which
\[ |I_0(x)| + 2|I_1(x)| + 4|I_2(x)| \geqslant (1+o(1))E(\Gamma_x).\]
This completes the proof of Theorem~\ref{General-2}.

\section{Proofs of Theorems~\ref{SmallCase}-\ref{InterestingCase}}

We begin with the proof of Theorem~\ref{LargeCase} which is the simplest of the four. To avoid unnecessary repetitions both in this and the proofs that follow, whenever we refer to the neighbours of a vertex in a graph we will mean the neighbours of this vertex in the underlying simple graph.

\begin{proof}[Proof of Theorem~\ref{LargeCase}] 
Let $G = (C_2)^{n} \times (C_m)$ where $m = \lfloor 2^{\frac{(1-c)n}{c}}\rfloor$ and let us write $N$ for the order of $G$. So $2^n \sim N^c$ and $m \sim N^{1-c}$. We now compute $f(x;c')$ for each $x \in G$.
\begin{itemize}
\item[(a)] If $x$ is of the form $(x',1)$ where $1$ is the identity in $C_m$, then $x^2 = 1$, $\Gamma_x$ has no loops, and each $y$ has exactly two neighbours in $\Gamma_x$, namely $xy$ and $xy^{-1}$ unless $y^2=1$. Since there are at most $2^{n+1} = o(N)$ such $y$'s we have $e(\Gamma_x) = (1+o(1))N$ and so $f(x,c') = N^{-c'+o(1)}$. Since there are $2^n \sim N^c$ such elements $x$ the total contribution of these elements to $f(c')$ is $N^{c-c'+o(1)}$ which tends to infinity if $c' < c$ and tends to 0 if $c' > c$.
\item[(b)] If $x$ is not as in (a) but still satisfies $x^2=1$, then each $y$ has exactly two neighbours in $\Gamma_x$, namely $xy$ and $xy^{-1}$ unless $y^2 \in \{1,x\}$. Since there are at most $o(N)$ such $y$'s we have $e(\Gamma_x) = (1 + o(1))N$ and so $f(x,c') = N^{-c'+o(1)}$. Since there are at most $2^n \sim N^c$ such elements $x$ the total contribution of these elements to $f(c')$ is $N^{c-c'+o(1)}$ which tends to 0 if $c' > c$.
\item[(c)] If $x$ is not as in (a) or (b) then all but $o(N)$ $y$'s have exactly four neighbours in $\Gamma_x$, namely $xy,xy^{-1},x^{-1}y,x^{-1}y^{-1}$. Therefore $e(\Gamma_x) = (2+o(1))N$ and so $f(x,c') \leqslant N^{-2c'+o(1)}$. In particular the total contribution of such elements to $f(c')$ is at most $N^{1-2c' + o(1)}$ which (since $c \geqslant 1/2$) tends to 0 if $c' > c$.   
\end{itemize}
Combining the calculations in (a)-(c) we see that $f(c')$ tends to infinity if $c' < c$ and to $0$ if $c' > c$ as required. 
\end{proof}

We now proceed with the proof of Theorem~\ref{SmallCase}. In the proof we will repeatedly make use of the facts that for each element $x$ of $S_n$ there are at most $(n!)^{1/2 + o(1)}$ elements $y$ with $y^2 = x$ and (unless $x$ is the identity) at most $(n-2)!$ elements which commute with $x$. (The second assertion is a simple exercise. To prove the first assertion, it is enough by~\cite[Exercise 7.69.c]{Stanley} to prove it when $x$ is the identity. A proof of this is included in~\cite{CM2}.) In particular for each element $x$ of $S_n$ there are at most $o(n!)$ elements $y$ for which the set $\{xy,xy^{-1},x^{-1}y,x^{-1}y^{-1},yx,y^{-1}x,yx^{-1},y^{-1}x^{-1}\}$ has size less than~$8$. The only case where this might not be immediately obvious is that for each $x$ there are at most $o(n)$ elements $y$ with $(xy)^2 = y^2$. But if there is a $z$ with $(xz)^2 = z^2$ then $z^{-1}xz = x^{-1}$. So $x^{-1}$ belongs to the equivalence class of $x$ and so the number of $y$'s with $(xy)^2=y^2$ is the same as the number of $y$'s for which $y^{-1}xy = x$, i.e.~it is the same as the number of elements which commute with $x$.

\begin{proof}[Proof of Theorem~\ref{SmallCase}] 
Let $G = S_n \times C_m$ where $m = \lfloor n!^{\frac{2c}{1-2c}}\rfloor$ and let us write $N$ for the order of $G$. So $n! \sim N^{1-2c}$ and $m \sim N^{2c}$. 

We begin by computing $h(x)$ for each $x \in G$. Since for each $x \in C_m$ there are at most two $y$'s in $C_m$ with $y^2 = x$ and for each $x \in S_n$ there are at most $n!^{1/2 + o(1)}$ $y$'s in $S_n$ with $y^2 = x$ then for each element $x \in G$ there are at most $N^{1/2 - c + o(1)}$ elements $y$ with $y^2 = x$. In particular $\ell(\Gamma_x) = O(N^{1/2 - c + o(1)})$ and so $h(x) = (1+o(1))$.

We continue by computing $g(x)$ and thus $f(x;c')$ for each $x \in G$.
\begin{itemize}
\item[(a)] There is at most one (non-identity) element $x$ of the form $(1,x')$, where $1$ is the identity of $S_n$, with $x^2=1$. It contributes $o(1)$ to $f(c')$.
\item[(b)] Suppose $x$ is of the form $(1,x')$ where $1$ is the identity of $S_n$ and $x^2 \neq 1$. Then all but $o(N)$ elements $y$ have exactly four neighbours in $\Gamma_x$, namely $xy,xy^{-1},x^{-1}y$ and $x^{-1}y^{-1}$. Therefore $e(\Gamma_x) = (1+o(1))N$ and so $f(x,c')=N^{-2c' + o(1)}$. Since there are either $m$ or $m-1$ such elements, the total contribution of these elements to $f(c')$ is $N^{2c - 2c' + o(1)}$ which tends to infinity if $c' < c$ and tends to 0 if $c' > c$.
\item[(c)] If $x$ is not of the above forms but $x^2=1$, then all but $o(N)$ elements $y$ have exactly four neighbours in $\Gamma_x$, namely $xy,xy^{-1},yx$ and $y^{-1}x$. Therefore $e(\Gamma_x) = (2+o(1))N$ and so $f(x,c')=N^{-2c' + o(1)}$. Since there are (at most) $n!^{1/2 + o(1)} = N^{1/2 - c + o(1)}$ such elements the total contribution of these elements to $f(c')$ is at most $N^{1/2 - c - 2c'}$ which (since $c \geqslant 1/4$) tends to $0$ if $c' > c$.
\item[(d)] If $x$ is not of the above forms then all but $o(N)$ elements $y$ have exactly eight neighbours in $\Gamma_x$. Therefore $e(\Gamma_x) = (4+o(1))N$ and so $f(x,c')=N^{-4c' + o(1)}$. In particular the total contribution of these elements to $f(c')$ is at most $N^{1 - 4c' + o(1)}$ which (since $c \geqslant 1/4$) tends to $0$ if $c' > c$.
\end{itemize}
Combining the calculations in (a)-(d) we see that $f(c')$ tends to infinity if $c' < c$ and to $0$ if $c' > c$ as required. 
\end{proof}

\begin{proof}[Proof of Theorem~\ref{LargerCase}]
Let $G = (C_2)^{n} \times (D_{2m})$ where $m = \lfloor 2^{\frac{(4-3c)n}{3c}}\rfloor$ and $D_{2m}$ is the dihedral group of order $2m$ and let us write $N$ for the order of $G$. So
$2^n = \Theta(N^{3c/4})$ and $2m =  \Theta(N^{1-3c/4})$. We now compute $f(x;c')$ for each $x \in G$.
\begin{itemize}
\item[(a)] Suppose $x$ is of the form $(x',1)$ where $1$ is the identity element of $D_{2m}$ and $x'$ is a non-identity element of $C_2^n$. Since no element of $G$ squares to $x$ then $\Gamma_x$ has no loops and so $h(x) = 1$. Note that $x$ is central satisfying $x^2=1$ and therefore every $y$ has at most two neighbours in $\Gamma_x$, namely $xy$ and $xy^{-1}$. Moreover it has exactly one neighbour if and only if $y$ is an involution. Since $D_{2m}$ has either $m+1$ or $m+2$ involutions depending on the parity of $m$ we have that $G$ has $\Theta(N/2)$ involutions and so $e(\Gamma_x) = (3/4 + o(1))N$ and $f(x;c') = N^{-3/4c' + o(1)}$. Since there are $2^n = \Theta(N^{3c/4})$ such elements, the total contribution of these elements to $f(c')$ is $N^{3c/4 - 3c'/4 + o(1)}$ which tends to infinity if $c' < c$ and tends to 0 if $c' > c$.
\item[(b)] Suppose $x$ is of the form $(1,x')$ where $1$ is the identity in $(C_2)^n$ and $x'$ is a non-identity element of $D_{2m}$ representing a rotation of the regular $m$-gon. Then the equation $y^2=x$ has at least $2^n = \Theta(N^{3c/4})$ solutions. Since $c>1$, we deduce that $h(x) = e^{-\Omega(n^{1/4})}$ and so $f(x;c') = o(1/N)$. So the total contribution of these elements to $f(c')$ is $o(1)$.
\item[(c)] For every other element $x$ we have that $\Gamma_x$ contains no loops. Moreover, either we have that $x$ is not an involution, in which case every $y$ has at least two neighbours in $\Gamma_x$, namely $xy$ and $x^{-1}y$, or $x$ is an involution in which case all but at most $O(2^n) = O(N^{3/4})$ elements $y$ commute with $x$ with the rest having at least two neighbours in $\Gamma_x$, namely $xy$ and $yx$. So in both cases we have $e(\Gamma_x) \geqslant (1+o(1))N$, and therefore $f(x,c') = 1/N^{c'} = o(1/N)$. So the total contribution of these elements to $f(c')$ is also $o(1)$. 
\end{itemize}
Combining the calculations in (a)-(d) we see that $f(c')$ tends to infinity if $c' < c$ and to $0$ if $c' > c$ as required. 
\end{proof}

As we mentioned in the introduction the proof of Theorem~\ref{InterestingCase} is based on the fact that there are finitely many values that can appear as the proportion of involutions of a finite group in the interval $[1/2,1]$. The earliest reference we could locate for this result is~\cite{Miller}. Here, we will make use of the following theorem of Wall~\cite{Wall} which classifies all groups for which more than half of their elements are involutions.

\begin{theorem}\label{InvolutionTheorem}
Let $G$ be a group having exactly $(1/2 + c)|G|$ involutions, where $c > 0$. Then there is a positive integer $n$ such that $c = 1/2n$. Moreover, $G$ is the direct product of copies of $C_2$ together with a group $H$ which has one of the following forms:
\begin{itemize}
\item[(I)] $H$ contains an abelian subgroup $H_1$ of index 2 and an element $g$ with $gh=h^{-1}g$ for every $h \in H$. Moreover $C_2$ does not appear as a factor in the decomposition of $H_1$ as products of cyclic groups of prime power orders.
\item[(II)] $H = D_8 \times D_8$.
\item[(III)] $H$ is generated by involutions $c,x_1,y_1,\ldots,x_r,y_r$ for which all of them commute with each other apart from the pairs $\{x_i,y_i\}$ for $1 \leqslant i \leqslant r$ which satisfy $(x_iy_i)^2=c$.
\item[(IV)] $H$ is generated by involutions $c,x_1,y_1,\ldots,x_r,y_r$ for which all of them commute with each other apart from the pairs $\{c,x_i\}$ for $1 \leqslant i \leqslant r$ which satisfy $(cx_i)^2=y_i$.
\end{itemize} 
In particular, for each $d > 0$, there is a finite set $S(d)$ of groups such that every group $G$ with more than $(1/2 + d)|G|$ involutions is a direct product of copies of $C_2$ together with a group from $S(d)$.
\end{theorem}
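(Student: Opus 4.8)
The plan is to reduce the classification to a statement about a bounded ``core'' group by combining an elementary covering argument with the structure theory of groups having small conjugacy classes. Write $T$ for the set of elements $x \in G$ with $x^2 = 1$, so that by hypothesis $|T| = (1/2 + c)|G| > |G|/2$. First I would run the covering argument: for every $g \in G$ the sets $gT$ and $T$ each contain more than half of $G$, so $|gT \cap T| \geq 2|T| - |G| = 2c|G|$. Every $t$ in this intersection satisfies $t^2 = 1$ and $(gt)^2 = 1$, and the latter rearranges to $tgt^{-1} = g^{-1}$; thus $t$ inverts $g$. Since the set of elements inverting a fixed non-identity $g$ is a coset of $C_G(g)$, now seen to be nonempty, we obtain $|C_G(g)| \geq 2c|G|$, i.e.\ every conjugacy class of $G$ has size at most $\lfloor 1/(2c) \rfloor$. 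Moreover $|T| > |G|/2$ forces $\langle T \rangle = G$, so $G$ is generated by involutions and every element is real. By the theory of BFC groups (Neumann, with explicit bounds of Wiegold) the class-size bound yields $|G'| \leq M(c)$ for a function $M$ of $c$ alone, while $G/G'$, being an abelian group generated by involutions, is elementary abelian of exponent $2$.

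\textbf{The reduction to a core.} Next I would strip off the elementary abelian $2$-part, writing $G = E \times H$ with $E \cong (C_2)^k$ and $H$ admitting no direct $C_2$ factor. Since $|\{x \in E \times H : x^2 = 1\}| = 2^k\,|\{y \in H : y^2 = 1\}|$ and $|E \times H| = 2^k|H|$, the proportion of square roots of $1$ --- hence the value of $c$ --- is the same for $H$ as for $G$; so it suffices to describe the possible cores $H$. For any fixed $d > 0$, the hypothesis $c \geq d$ bounds the conjugacy classes of $H$ by $\lfloor 1/(2d) \rfloor$ and $|H'|$ by $M(d)$; together with $H = \langle T_H \rangle$ and the absence of a $C_2$ direct factor this will bound $|H|$, which gives the finite set $S(d)$ claimed at the end.

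\textbf{The structural dichotomy.} The heart of the argument is the case analysis of the core. For involutions $s, t$ the subgroup $\langle s, t \rangle$ is dihedral and $[s,t] = (st)^2$, so the obstruction to $H$ being elementary abelian is carried by the non-commuting graph on the involutions and by the map $(s,t) \mapsto (st)^2$. Using that $T_H$ is large --- so, by the centralizer bound above, no element fails to commute with more than $(1 - 2c)|H|$ others --- I would show that either there is an element $g$ with $ghg^{-1} = h^{-1}$ for all $h$ in an abelian subgroup $H_1$ of index $2$, in which case a short analysis of the primary decomposition of $H_1$, using that $H$ has no $C_2$ direct factor, forces case (I); or else $H'$ is a central elementary abelian $2$-group and the relations among the generating involutions are forced to take one of the shapes $(x_i y_i)^2 = c$ (case (III)) or $(c x_i)^2 = y_i$ (case (IV)), apart from finitely many sporadic configurations whose only survivor is $D_8 \times D_8$ (case (II)). Throughout, any configuration whose count of elements of order at most $2$ comes out $\leq |H|/2$ is discarded, which is what makes the list exhaustive. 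Once the four families are in hand, a direct count of the elements of order at most $2$ in each shows the proportion is always exactly $1/2 + 1/(2n)$ for an explicit positive integer $n$ --- the order of the image of the squaring map on $H_1$ in case (I), $n = 2^r$ in cases (III) and (IV), and $n = 8$ in case (II) --- giving $c = 1/(2n)$; restricting to $c \geq d$ then leaves only finitely many $H$.

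The main obstacle is the structural step: turning the soft data ``many involutions, small conjugacy classes, generated by involutions, no $C_2$ direct factor'' into the rigid four-way normal form. The centralizer bound already makes the problem finite for each fixed $c$, but extracting the precise presentations --- as opposed to merely ``$|H|$ is bounded'' --- requires careful control of the quadratic/bilinear map $(s,t) \mapsto (st)^2$ on $T_H$, of how the involutions generate $H$ modulo $H'$, and a finite but genuinely case-by-case elimination to isolate the sporadic group in case (II).
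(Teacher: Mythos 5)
The paper does not actually prove this statement: it is Wall's classification of groups consisting mostly of involutions, quoted from \cite{Wall}, and the paper's ``proof'' is that citation. Your preliminary steps are correct and standard: the covering argument ($|gT\cap T|\geq 2c|G|$, every such $t$ inverts $g$, the inverting elements form a coset of $C_G(g)$, hence class sizes are at most $1/(2c)$); the fact that $|T|>|G|/2$ forces $G=\langle T\rangle$ and that $G/G'$ is elementary abelian of exponent $2$; Neumann's BFC bound $|G'|\leq M(c)$; the Krull--Schmidt reduction to a core $H$ with no direct $C_2$ factor, which preserves the proportion of square roots of $1$; and the concluding counts in the families (I)--(IV), which do give $c=1/(2n)$ with the values of $n$ you state and then the finiteness of $S(d)$.

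The genuine gap is that the theorem's actual content --- the exhaustive four-way normal form --- is precisely the step you never carry out. The paragraph headed ``The structural dichotomy'' only asserts that one \emph{would} show the dichotomy between an abelian index-$2$ subgroup inverted by a single element (case (I)) and a central elementary abelian commutator configuration leading to (III), (IV) and the lone sporadic survivor $D_8\times D_8$; no argument is given for why these exhaust all possibilities, and you yourself label this ``the main obstacle.'' Without it neither the completeness of (I)--(IV) nor the conclusion $c=1/(2n)$ is established, so the proposal is a plan rather than a proof of the hard part. There is also a secondary inaccuracy in the reduction paragraph: the claim that bounded class sizes, $|H'|\leq M(d)$, generation by involutions and the absence of a $C_2$ direct factor ``will bound $|H|$'' is false as stated --- extraspecial $2$-groups of plus type satisfy all four conditions with class sizes at most $2$ and $|H'|=2$ yet have unbounded order, and their involution proportion $1/2+2^{-(r+1)}$ shows that the finiteness of $S(d)$ genuinely requires the hypothesis that involutions make up at least a $(1/2+d)$-fraction, which in your outline is only exploited after the (unproved) classification.
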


Armed with this result we can move on to the proof of Theorem~\ref{InterestingCase}.

\begin{proof}[Proof of Theorem~\ref{InterestingCase}]
Suppose there is a family $(G_k)$ of groups for which $c > 4/3$ is a threshold for diameter 2. Let $c' = (c+4/3)/2$. By Theorem~\ref{General} we have that $f_{G_k}(c') \to \infty$. Let $G$ be a group from the family $(G_k)$ for which $f_G(c')$ is sufficiently large and let us write $N$ for its order and $\alpha N$ for the number of its involutions. We now compute $f(x;c')$ for each $x \in G$. We may assume that the equation $x = y^2$ has at most $N^{1/2}$ solutions as otherwise $\ell(\Gamma_x) \geqslant N^{1/2} $ which implies that $f(x,c') = o(1/N)$ and the total contribution of such elements to $f(c')$ is $o(1)$.
\begin{itemize}
\item[(a)] Suppose that $x$ is not an involution. Then each $y$ has at least two neighbours in $\Gamma_x$, namely $xy$ and $x^{-1}y$ and so $e(\Gamma_x) \geqslant N$. It follows that $f(x;c') \leqslant 1/N^{c'} = o(1/N)$ and so the total contribution of these elements to $f(c')$ is $o(1)$.
\item[(b)] Suppose that $x$ is an involution which is not central. Let $C(x) = \{y\in G:yx=xy\}$ be the centraliser of $x$ and suppose that it has size $\beta N$. Since $C(x)$ is a subgroup of $G$ and since $x$ is not central we must have $\beta \leqslant 1/2$. The only $y$'s which have exactly one neighbour in $\Gamma_x$ must belong to $C(x)$ so 
\[e(\Gamma_x) \geqslant (\beta + 2(1-\beta) + o(1))n/2 = (1 - \beta/2 + o(1))N \geqslant (3/4 + o(1))N.\]
It follows that $f(x;c') \leqslant N^{-3c'/4 + o(1)} = o(1/N)$. So again the total contribution of these elements to $f(c')$ is $o(1)$.
\item[(c)] If now $x$ is a central involution, then each $y$ which does not satisfy $y^2=x$ has either one or two neighbours in $\Gamma_x$ depending on whether $y$ is an involution or not. So $e(\Gamma_x) = (1 - \alpha/2 + o(1))N$. In particular, if $\alpha \leqslant 1/2 + d$, then the total contribution of these elements to $f(c')$ is at most $N^{1-c'(3/4 - d/2 + o(1))}$ which is $o(1)$ if $d > 0$ is sufficiently small.
\end{itemize}
Combining the calculations in (a)-(c) we see that unless the proportion of involutions in $G$ is at least $1/2+d$ for some sufficiently small but fixed $d$, then $f(c')$ tends to $0$, a contradiction.

So we can now use Theorem~\ref{InvolutionTheorem} to see that $G$ must be a direct product of copies of $C_2$ together with a group $H$ from a finite set $S(d)$ of groups. In particular, if $M(d)$ is the largest cardinality of a group in $S(d)$ then we have that $G$ contains at least $N/M(d)$ central involutions, namely all elements of the form $(x,1)$ where $x$ belongs to the direct product of the $C_2$'s and $1$ is the identity element of $H$. In particular, it contains at least $\Theta(N)$ central involutions $x$ for which the equation $y^2=x$ has at least $n^{1/3}$ solutions. For each such $x$ we have $g(x) = (1+o(1))$ and so by (c) the total contribution of these elements to $f(c')$ is $\Theta(N^{1 - c'(1 - \alpha/2)})$.  This tends to infinity if $c' < \frac{2}{2-\alpha}$ and tends to 0 if $c' > \frac{2}{2-\alpha}$. So $c = \frac{2}{2-\alpha}$ which is equal to $\frac{4n}{3n-1}$ if $\alpha = 1/2 + 1/2n$. To complete the proof of the theorem, we just observe that the given family has a proportion of $1/2 + 1/2n$ involutions as so does $D_{4n}$. 
\end{proof}

\section{Conclusion and open problems}

Even though we stated the results with fixed $\varepsilon$, the same proofs work to show the following

\begin{theorem}
Let $c$ be a positive real number and let $(G_k)$ be a family of groups of order $n_k$ with $n_k$ tending to infinity. Then $c$ is the threshold for diameter~2 for this family if and only if  $f_{G_k}(c_k) \to \infty$ whenever $c-c_k = \omega(\frac{\log{\log{n_k}}}{\log{n_k}})$ and $f_{G_k}(c_k) \to 0$ whenever $c_k-c = \omega(\frac{1}{\log{n_k}})$.
\end{theorem}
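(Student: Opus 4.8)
The plan is to re-examine the proofs of Theorems~\ref{General-1} and~\ref{General-2} and verify that they remain valid when the fixed $\varepsilon$ is replaced throughout by a sequence $\varepsilon_k\to 0$ that decays no faster than the stated rates, so that Theorem~\ref{General} upgrades accordingly. One implication is immediate and requires nothing new: if the two limit conditions hold, then applying the first to the constant sequence $c_k=c-\varepsilon$ (legitimate, since $c-c_k=\varepsilon$ is a fixed positive constant, hence $\omega(\log\log n/\log n)$) and the second to $c_k=c+\varepsilon$ (legitimate, since $c_k-c=\varepsilon=\omega(1/\log n)$) recovers, for every fixed $\varepsilon>0$, the hypotheses $f_{G_k}(c-\varepsilon)\to\infty$ and $f_{G_k}(c+\varepsilon)\to 0$ of Theorem~\ref{General}; hence $c$ is a threshold. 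The substance therefore lies in the converse and in the sequence versions of Theorems~\ref{General-1} and~\ref{General-2}. Throughout I will use the elementary inequality $f_G(c')\leqslant |G|^{-(c'-c'')/2}f_G(c'')$ for $c'>c''$, which holds summand by summand because $g(x)\leqslant |G|^{-1/2}$ (as $e(\Gamma_x)\geqslant|G|/2$ by (O2)) and $h(x)\leqslant 1$, together with the fact that $c'\mapsto f_G(x;c')$ is log-convex.

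For Theorem~\ref{General-1}, I would track where the fixed $\varepsilon$ is actually used; it enters in exactly two places. First, in the chain of exponent reductions $c-\varepsilon\rightsquigarrow c-\varepsilon/2\rightsquigarrow c-\varepsilon/4$: the last step absorbs the polynomial loss $n^{-\delta}$ coming from the bound $28n^{\delta}$ on the maximum degree of the auxiliary graph $H$, via $n^{-\delta}g(x)^{c-\varepsilon/2}\geqslant g(x)^{c-\varepsilon/4}$, which (using $e(\Gamma_x)\geqslant n/2$) holds as soon as $\delta\leqslant\varepsilon/8$. Secondly, in Lemma~\ref{almost-independence}, which is applied only to pairs with at most $n^{1-\delta}$ common edges and is useful only when $(1-p^2)^{-r}=1+o(1)$ for all $r\leqslant n^{1-\delta}$, that is, when $p^2n^{1-\delta}=o(1)$, i.e. when $\delta=\omega(\log\log n/\log n)$ (recall $p^2=\Theta(\log n/n)$). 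Replacing $\varepsilon$ by $\varepsilon_k$, a choice of $\delta_k$ meeting both requirements exists precisely when $\varepsilon_k=\omega(\log\log n/\log n)$; fixing such a $\delta_k$ and invoking the hypothesis applied to the shifted sequence $c-\varepsilon_k/4$ (itself $\omega(\log\log n/\log n)$ below $c$) in place of the fixed value $c-\varepsilon/4$, the same second moment computation yields $\mathbb E Y\geqslant\tfrac{1}{60}f_{G_k}(c-\varepsilon_k/4)\to\infty$ and $\mathbb E Y(Y-1)\leqslant(1+o(1))(\mathbb E Y)^2$, so with high probability the diameter exceeds $2$.

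For Theorem~\ref{General-2} the situation is simpler, since the only auxiliary ingredient is Janson's inequality: every error term in the proof — the $O(np^3)$ in the exponent, the factor $e^{p^2|J(x)|}$ for the retained $x$ (those with $|J(x)|<n^{2/3}$), and the sub-polynomial factor arising from the $O(n^{2/3})$ exceptional $y$'s in~\eqref{EQ1} — is $1+o(1)$ uniformly in $x$ and depends only on $p^2=\Theta(\log n/n)$, not on $\varepsilon$. Hence one obtains exactly as before $\sum_x\Pr(d(1,x)>2)\leqslant(1+o(1))f_{G_k}(c+\varepsilon_k/2)$, and the sole constraint is that $c+\varepsilon_k/2$ must still lie in the range where the hypothesis is available, namely $\varepsilon_k/2=\omega(1/\log n)$; for such $\varepsilon_k$ the right-hand side tends to $0$. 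Combining these two refined statements gives the forward implication along the lines of the proof of Theorem~\ref{General}: a failure of the first limit condition along a sequence with $\liminf(c-c_k)>0$ reduces by monotonicity to Theorem~\ref{General} itself, while along a sequence with $c_k\to c$ the displayed elementary inequality turns it into a sequence of exponents below $c$ along which $f_{G_k}\to 0$, contradicting the refined Theorem~\ref{General-2}; symmetrically for the second condition using the refined Theorem~\ref{General-1}. The main obstacle is purely one of bookkeeping: one must confirm that each ``$o(1)$'' appearing in the proofs of Theorems~\ref{General-1} and~\ref{General-2} is genuinely a fixed function of $n_k$ tending to $0$, with no concealed dependence on $\varepsilon$, so that it survives once $\varepsilon$ is allowed to vanish — and it is exactly the two places isolated above, the admissible range of $\delta$ in the construction of $H$ and the half-gap $\varepsilon/2$ needed to reach the hypothesis of Theorem~\ref{General-2}, that pin down the rates $\omega(\log\log n/\log n)$ and $\omega(1/\log n)$.
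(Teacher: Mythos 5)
Your sequence versions of Theorems~\ref{General-1} and~\ref{General-2} are sound, and they are precisely what the paper's one-line justification appeals to: the binding uses of $\varepsilon$ are the two you isolate, namely the requirement $\delta\leqslant\varepsilon/8$ in the absorption of $n^{-\delta}$ and the requirement $p^2n^{1-\delta}=o(1)$ when Lemma~\ref{almost-independence} is applied to pairs with at most $n^{1-\delta}$ common edges (the passage from exponents $\frac{\sqrt{c-\varepsilon}}{1-p},\frac{c-\varepsilon}{1-p^2}$ to $\sqrt{c-\varepsilon/2},c-\varepsilon/2$ is a third place where $\varepsilon$ matters, but it only needs $\varepsilon_k\gg p$, which is dominated by $\omega(\log\log n/\log n)$). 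The ``if'' direction via constant sequences is also fine.

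The genuine gap is in your ``only if'' direction. Having produced a sequence of exponents $c_k+t_k+s_k<c$ tending to $c$ at which the diameter is at most~$2$ with high probability, you claim this contradicts ``$c$ is the threshold''; but the paper's definition of threshold only constrains $p\leqslant\sqrt{(c-\varepsilon)\log n/n}$ and $p\geqslant\sqrt{(c+\varepsilon)\log n/n}$ for \emph{fixed} $\varepsilon>0$, and says nothing about exponents that approach $c$. This is not a bookkeeping issue one can patch: with the fixed-$\varepsilon$ definition the implication you are trying to prove is false. For a fixed $c\in(1/2,1)$ take $G_k=(C_2)^n\times C_m$ with $m$ odd chosen so that $2^n=\Theta(N^{c_N})$ where $N=2^nm$ and $c_N=c-1/\log\log N$. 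The computation in the proof of Theorem~\ref{LargeCase} gives, for $c'$ in a neighbourhood of $c$, $f_{G_k}(c')=\Theta\bigl(N^{c_N-c'}\bigr)+O\bigl(N^{1-2c'+o(1)}\bigr)$, so $f_{G_k}(c-\varepsilon)\to\infty$ and $f_{G_k}(c+\varepsilon)\to0$ for every fixed $\varepsilon$, whence $c$ is the threshold by Theorem~\ref{General}; yet for $c_k=c_N$ we have $c-c_k=1/\log\log N=\omega(\log\log n_k/\log n_k)$ while $f_{G_k}(c_k)=\Theta(1)\not\to\infty$ (and choosing instead $2^n=\Theta(N^{c+1/\log\log N})$ defeats the second condition in the same way). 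Consequently the stated equivalence can only be read with the left-hand side strengthened to the sequence form of the threshold property (diameter $>2$ whp whenever $p\leqslant\sqrt{c_k\log n/n}$ with $c-c_k=\omega(\log\log n/\log n)$, and $\leqslant2$ whp whenever $c_k-c=\omega(1/\log n)$); under that reading your refined theorems together with the two-sided quantitative monotonicity $f_G(c'+t)\leqslant N^{-t/2}f_G(c')\leqslant f_G(c'-t)N^{-t}$ do yield both implications, but your derivation of the converse from the fixed-$\varepsilon$ threshold property fails at that final contradiction step.
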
 

As in the case of the diameter of random graphs (see e.g.~\cite[Theorem 10.10]{Bollobas}) one cannot expect to improve the $\omega(\frac{1}{\log{n_k}})$ term. Note however that for achieving diameter greater than~2 we needed an $\omega(\frac{\log{\log{n_k}}}{\log{n_k}})$ term rather than an $\omega(\frac{1}{\log{n_k}})$ term. It would be interesting to check whether this can be improved.

It would be also interesting to check whether loops in $\Gamma_x$ really do make a difference or whether the same results hold if $f_G(c)$ is replaced by $\tilde{f}_G(c) = \sum_{x \in G\setminus \{1\}}g_G(x)^c$. 

Finally we mention a related group theoretic problem that arises. As we have already mentioned the problem of which numbers in $(1/2,1]$ can appear as proportions of involutions of a finite group has been well studied and the groups with these proportions have been characterised. But what about numbers in the interval $[0,1/2]$? Is it true that for every number in this interval there are finite groups whose proportion of involutions is sufficiently close to this number, or are other gaps in this interval as well? We haven't been able to locate this problem in the literature but we believe it is an interesting one to investigate.

\bigskip

\noindent
{\footnotesize
Demetres Christofides, Queen Mary, University of London, School of Mathematical Sciences, London E1 4NS, United Kingdom, \href{mailto:christofidesdemetres@gmail.com}{\tt christofidesdemetres@gmail.com}

\smallskip

\noindent Klas \Markstrom, Department of Mathematics and Mathematical Statistics, \Umea\ University, 90187 \Umea, Sweden, \href{mailto:klas.markstrom@math.umu.se}{\tt klas.markstrom@math.umu.se}
}

\end{document}